\numberwithin{equation}{section}
\title[Integral representations of modified Struve function]{Integral representations and summations of modified Struve function}
\author[\'Arp\'ad Baricz]{\'Arp\'ad Baricz}
\address{Department of Economics, Babe\c{s}-Bolyai University, Cluj-Napoca 400591, Romania} \email{bariczocsi@yahoo.com}
\author{Tibor K. Pog\'any}
\address{Faculty of Maritime Studies, University of Rijeka, 51000 Rijeka, Croatia} \email{poganj@pfri.hr}
\keywords{Modified Struve function; Bessel function and modified Bessel function of the first kind; Neumann, Kapteyn and
Schl\"omilch series of modified Bessel and Struve functions, Dirichlet series, Cahen formula, generalized hypergeometric function,
Struve differential equation.}
\subjclass[2010]{Primary 33C10, 33E20, 40H05; Secondary 30B50, 40C10, 65B10.}
\newtheorem{theorem}{Theorem}
\newtheorem{corollary}{Corollary}
\newtheorem{remark}{Remark}
\begin{document}

\maketitle

\begin{abstract} It is known that Struve function $\mathbf H_\nu$ and modified Struve function $\mathbf L_\nu$
are closely connected to Bessel function of the first kind $J_\nu$ and to modified Bessel function of the first
kind $I_\nu$ and possess representations through higher transcendental functions like generalized hypergeometric
${}_1F_2$ and Meijer $G$ function. Also, the NIST project and Wolfram formula collection contain a set of Kapteyn
type series expansions for $\mathbf L_\nu(x)$. In this paper firstly, we obtain various another type integral
representation formulae for $\mathbf L_\nu(x)$ using the technique developed by D. Jankov and the authors. Secondly,
we present some summation results for different kind of Neumann, Kapteyn and Schl\"omilch series built by $I_\nu(x)$
and $\mathbf L_\nu(x)$ which are connected by a Sonin--Gubler formula, and by the associated modified Struve
differential equation. Finally, solving a Fredholm type convolutional integral equation of the first kind,
Bromwich--Wagner line integral expressions are derived for the Bessel function of the first kind $J_\nu$
and for an associated  generalized Schl\"omilch series.
\end{abstract}

\section{\bf Introduction}
\setcounter{equation}{0}

Bessel and modified Bessel function of the first kind, Struve and modified Struve function possess power series
representation of the form \cite{watson}:
   \begin{align*}
      J_\nu(z) &= \sum_{n \ge 0} \dfrac{(-1)^n \left(\frac x2\right)^{2n+\nu}}{\Gamma(n+\nu+1)\, n!}, \qquad \qquad \quad
      I_\nu(z) = \sum_{n \ge 0} \dfrac{ \left(\frac x2\right)^{2n+\nu}}{\Gamma(n+\nu+1)\, n!}\\
      \mathbf H_\nu(z) &= \sum_{n \ge 0} \dfrac{(-1)^n \left(\frac x2\right)^{2n+\nu+1}}
                          {\Gamma\left(n+\frac32\right)\Gamma\left(n+\nu+\frac32\right)}, \qquad
      \mathbf L_\nu(z) = \sum_{n \ge 0} \dfrac{\left(\frac x2\right)^{2n+\nu+1}}
                         {\Gamma\left(n+\frac32\right)\Gamma\left(n+\nu+\frac32\right)}\, ,
   \end{align*}
where $\nu \in \mathbb R$ and $z \in \mathbb C$. Struve \cite{struve} introduced $\mathbf H_\nu$ function as the series
solution of the nonhomogeneous second order Bessel type differential equation (which carries his name). However, the
modified Struve function $\mathbf L_\nu$ appeared into mathematical literature by Nicholson \cite[p. 218]{nicholson}.
Applications of Struve functions are manyfold and include among others optical investigations \cite[pp. 392--395]{walker};
general expression of the power carried by a transverse magnetic or electric beam, is given in terms of
$\mathbf L_{n+\frac12}$ \cite{april}; triplet phase shifts of the scattering by the singular nucleon-nucleon
potentials $\propto \exp(-x)/x^n$ \cite{gajewski}; leakage inductance in transformer windings \cite{hurwil};
boundary element solutions of the two-dimensional multi-energy-group neutron diffusion equation which governs
the neutronic phenomena in nuclear reactors \cite{itagaki}; effective isotropic potential for a pair of dipoles \cite{mercer};
perturbation approximations of lee--waves in a stratified flow \cite{miles}; quantum--statistical distribution functions
of a hard--sphere system \cite{nisteruk}; scattering of plane waves by circular cylinders for the general case of
oblique incidence and for both real and complex values of particle refractive index \cite{step};
aerodynamic sensitivities for subsonic, sonic, and supersonic unsteady, non-planar lifting--surface theory \cite{carson};
stress concentration around broken filaments \cite{fichter} and lift and downwash
distributions of oscillating wings in subsonic and supersonic flow \cite{WRW1, WRW}.

Series of Bessel and/or Struve functions in which summation indices
appear in the order of the considered function and/or twist arguments of the constituting functions, can be unified
in a double lacunary form:
   \[ \mathfrak B_{\ell_1, \ell_2}(z) := \sum_{n \ge 1} \alpha_n \mathscr B_{\ell_1(n)}(\ell_2(n)z),\]
where $ x \mapsto \ell_j(x) = \mu_j + a_jx,$ $j\in\{1,2\},$ $x \in\{0,1,\dots\}$, $z \in \mathbb C$ and $\mathscr B_{\nu}$ is
one of the functions $J_{\nu}, I_{\nu}, \mathbf H_{\nu}$ and $\mathbf L_{\nu}$. The classical theory of the Fourier--Bessel
series of the first type is based on the case when $\mathscr B_{\nu} = J_{\nu}$, see the celebrated monograph by Watson
\cite{watson}. However, varying the coefficients of $\ell_1$ and $\ell_2,$ we get three different cases which have not
only deep roles in describing physical models and have physical interpretations in numerous topics of natural sciences
and technology, but are also of deep mathematical interest, like e.g. zero function series \cite{watson}.
Hence we differ: Neumann series (when $a_1 \neq 0, a_2 = 0$), Kapteyn series (when $a_1 \cdot a_2 \neq 0$) and
Schl\"omilch series (when $a_1 = 0, a_2 \neq 0$). Here, all three series are of the first type (the series' terms
contain only one constituting function $\mathscr B_{\nu}$); the second type series contain product terms of two
(or more) members  - not necessarily different ones - from $J_{\nu}, I_{\nu}, \mathbf H_{\nu}$ and $\mathbf L_{\nu}$.
We also point out that the Neumann series (of the first type) of Bessel function of the second kind $Y_{\nu}$,
modified Bessel function of the second kind $K_{\nu}$ and Hankel functions (Bessel functions of the third kind)
$H^{(1)}_{\nu}, H^{(2)}_{\nu}$ have been studied by Baricz, Jankov and Pog\'any \cite{BJP}, while Neumann series
of the second type were considered by Baricz and Pog\'any in somewhat different purposes in \cite{BP, BP1}; see
also \cite{JPS}. An important role has throughout of this paper the Sonin--Gubler formula which connects modified
Bessel function of the first kind $I_\nu,$ modified Struve function $\mathbf L_\nu$ and a definite integral of the
Bessel function of the first kind $J_\nu$ \cite[p. 424]{gubler} (actually a special case of a Sonin--formula
\cite[p. 434]{watson}):
   \begin{equation} \label{W1}
      \int_0^\infty \dfrac{J_\nu(ax)}{x^2+n^2}\, \dfrac{{\rm d}x}{x^\nu} = \dfrac\pi{2n^{\nu+1}}\left( I_\nu(an)
                - \mathbf L_\nu(an)\right),
   \end{equation}
where $\Re(\nu)>-\frac12,$ $a>0$ and $\Re(n)>0;$ see \cite[p. 426]{watson} for the historical background of \eqref{W1}.

Thus, under extended Neumann series (of Bessel $J_{\nu}$ see \cite{watson}) we mean the following
   \[ \mathfrak N_{\mu,\eta}^{\mathscr B}(x) := \sum_{n \ge 1} \beta_n \mathscr B_{\mu n+\eta}(ax),\]
where $\mathscr B_{\nu}$ is one of the functions $I_{\nu}$ and $\mathbf L_{\nu}$. Integral representation discussions
began very recently with the introductory article by Pog\'any and S\"uli \cite{PS1}, which gives an exhaustive
references list concerning physical applications too; see also \cite{BJP}. In Section 2 we will concentrate to
the Neumann series
   \begin{equation} \label{NS}
      \mathfrak N_{\mu,\eta}(x) := \sum_{n \ge 1} \beta_n I_{\mu n+\eta}(ax)\, .
   \end{equation}
Secondly, Kapteyn series of the first type \cite{WK, WK1, NN} are of the form
   \[ \mathfrak K_{\nu, \mu}^{\,\,\mathscr B}(z) := \sum_{n \ge 1} \alpha_n
                   \mathscr B_{\rho + \mu n}\left((\sigma+\nu n)z\right);\]
more details about Kapteyn and Kapteyn--type series for Bessel function can be found also in \cite{BJP0, BP, DD, TLD}
and the references therein. Here we will consider specific Kapteyn--type series of the following form:
   \begin{equation} \label{K1}
      \mathfrak K_{\nu, \mu}^{\,\,\alpha}(x) := \sum_{n \ge 1} \dfrac{\alpha_n}{n^\mu}\, \left( I_{\nu n}(xn)
                                              - \mathbf L_{\nu n}(xn)\right);
   \end{equation}
this series appear as auxiliary expression in the fourth section of the article. Thanks to Sonin--Gubler formula
\eqref{W1} we give an alternative proof for integral representation of $\mathfrak K_{\nu, \mu}^{\,\,\alpha}(x)$,
see Section 3. Thirdly, under Schl\"omilch series \cite[pp. 155--158]{OXS} (Schl\"omilch considered only cases
$\mu\in \{0,1\}$), we understand the functions series
   \begin{equation}\label{A1}
      \mathfrak S_{\mu,\nu}^{\mathscr B}(z) := \sum_{n\ge 1} \alpha_n\,\mathscr B_\mu\left((\nu+n)z\right).
   \end{equation}
Integral representation are recently obtained for this series in \cite{JP1}, summations are given in \cite{TSVA}.
Our attention is focused currently to
   \[ \mathfrak S_{\mu,\nu}^{I, \mathbf L}(z) := \sum_{n\ge 1} \dfrac{\alpha_n}{n^\mu}\left(I_\nu(xn)
                                               - \mathbf L_\nu(xn)\right)\, .\]
The next generalization is suggested by the theory of Fourier series, and the functions which naturally come
under consideration instead of the classical sine and cosine, are the Bessel functions of the first kind and
Struve's functions. The next type series considered here we call generalized Schl\"omilch series \cite[p. 622]{watson},
\cite[p. 1803]{ito}
   \begin{equation}\label{00}
       \dfrac{a_0}{2\Gamma(\nu+1)} + \left(\dfrac x2\right)^{-\nu} \sum_{n \ge 1} \dfrac{a_nJ_\nu(nx)
              + b_n{\mathbf H}_\nu(nx)}{n^\nu} \, .
   \end{equation}
For further subsequent generalizations consult e.g. Bondarenko's recent article \cite{bondar} and the references therein
and Miller's multidimensional expansion \cite{miller1}. A set of summation formulae of Schl\"omilch series for Bessel
function of the first kind can be found in the literature, such as the Nielsen formula \cite[p. 636]{watson};
further, we have \cite[p. 65]{ECT}, also consult \cite{glasser, PG, MDR, TSVA, zayed1, zayed2}. Similar summations,
for Schl\"omilch series of Struve function, have been given by Miller \cite{miller2}, consult \cite{TSVA} too.

Further, we are interested in a specific variant of generalized Schl\"omilch series in which $J_\nu, \mathbf H_\nu$ are
exchanged by $I_\nu$ and $\mathbf L_\nu$ respectively, when $a_n, b_n$ are of the form
   $ a_0 = 0,$ $a_n = 2^{-\nu}n^{\nu-\mu}\,x^\nu = -b_n,$  $\mu\geq\nu>0,$ which results in
  $$\mathfrak T_{\nu, \mu}^{I, \mathbf L}(x) := \sum_{n \ge 1} \dfrac{I_\nu(nx) - {\mathbf L}_\nu(nx)}{n^\mu}.$$
Its alternating variant $\widetilde{\mathfrak T}_{\nu, \mu}^{I, \mathbf L}(x)$ we perform setting $(-1)^{n-1}a_n \mapsto a_n,$ where $n \in\{0,1,\dots\}$:
   \[ \widetilde{\mathfrak T}_{\nu, \mu}^{I, \mathbf L}(x) := \sum_{n \ge 1}
                 \dfrac{(-1)^{n-1}}{n^\mu}\left(I_\nu(nx) - {\mathbf L}_\nu(nx)\right)\, .\]
Summations of these series are one of tools in obtaining explicit expressions for integrals containing
Butzer--Flocke--Hauss complete Omega--function $\Omega(x)$ \cite{butzer, butzer1, BPS} and Mathieu series
$S(x), \widetilde S(x)$ \cite{mathieu, PST}.

Rayleigh \cite{SR} has showed that series $\mathfrak S_{0,\nu}^B(z)$ play important roles in physics, because they are useful
in investigation of a periodic transverse vibrations uniformly distributed in direction through the two dimensions of the membrane.
Also, Schl\"omilch series present various features of purely mathematical interest and it is remarkable that a null--function
can be represented by such series in which the coefficients are not all zero \cite[p. 622]{watson}. Summation results
in form of a double definite integral representation for $\mathfrak S_{\mu, \nu}^J(z)$, achieved {\em via} Kapteyn--series, have
been recently derived in \cite{JP}.

Finally, we mention that except the Sonin--Gubler formula \eqref{W1} another main tool we refer to is the Cahen formula on the
Laplace integral representation of Dirichlet series. Namely, the Dirichlet series
   \[ \mathscr D_{\boldsymbol a}(r) = \sum_{n \ge 1} a_n e^{-r\lambda_n},\]
where $\Re(r)>0,$ having positive monotone increasing divergent to infinity sequence $(\lambda_n)_{n \ge 1}$, possesses Cahen's integral
representation formula \cite[p. 97]{Cah}
   \begin{equation} \label{W6}
      \mathscr D_{\boldsymbol a}(r) = r \int_0^\infty e^{-rt} \sum_{n \colon \lambda_n \le t} a_n\, {\rm d}t =
                r \int_0^\infty \int_0^{[\lambda^{-1}(t)]} \mathfrak d_u a(u)\, {\rm d}t\,{\rm d}u \, ,
   \end{equation}
where $\mathfrak d_x:=1+\{x\}\frac{\rm d}{{\rm{d}}x}$. Here, $[x]$ and $\{x\}=x-[x]$ denote the integer and fractional part
of $x \in \mathbb{R}$, respectively. Indeed, the so--called counting sum $$\mathscr A_{\boldsymbol a}(t) =
\sum_{n \colon \lambda_n \le t} a_n$$ we find by the Euler--Maclaurin summation formula, following the procedure developed
by the second author \cite{P1}, see also \cite{PST}. Namely
   \[ \mathscr A_{\boldsymbol a}(t) = \sum_{n=1}^{[\lambda^{-1}(t)]} a_n
                                    = \int_0^{[\lambda^{-1}(t)]} \mathfrak d_u a(u)\, {\rm d}u  \, ,\]
since $\lambda \colon \mathbb R_+ \mapsto \mathbb R_+$ is monotone, there exists unique inverse $\lambda^{-1}$ for the
function $\lambda \colon \mathbb R_+ \mapsto \mathbb R_+$, $\lambda|_{\mathbb N} = (\lambda_n)$.

\section{\bf ${\mathbf L}_\nu$ as a Neumann series of modified Bessel $I$ functions}

Let us observe the well--known formulae \cite[Eqs. 11.4.18--19--20]{nist}
   \[ \mathbf H_\nu(z) = \begin{cases}
          \dfrac4{\sqrt{\pi}\,\Gamma\left(\nu+\frac12\right)} \displaystyle \sum_{n \ge 0} \dfrac{(2n+\nu+1)\Gamma(n+\nu+1)}
                  {n!(2n+1)(2n+2\nu+1)}\, J_{2n+\nu+1}(z) &\\ \\
          \sqrt{\dfrac z{2\pi}}\, \displaystyle \sum_{n \ge 0} \dfrac{\left(\frac{z}{2}\right)^n}{n!(n+\frac12)} \,
                  J_{n+\nu+\frac12}(z) &\\ \\
          \dfrac{\left(\frac{z}{2}\right)^{\nu+\frac12}}{\Gamma\left(\nu+\frac12\right)}\,
                  \displaystyle \sum_{n \ge 0} \dfrac{\left(\frac{z}{2}\right)^n}{n!\left(n+\nu+\frac12\right)} \, J_{n+\frac12}(z)
          \end{cases}\, ,\]
where the first formula is valid for $-\nu \not\in \mathbb N$. So, having in mind that $\mathbf L_\nu(z) =
-{\rm i}^{1-\nu} \mathbf H_\nu({\rm i}z)$ and $J_\nu(iz) = {\rm i}^\nu I_\nu(z)$, we immediately conclude that
   \begin{equation} \label{V2}
      \mathbf L_\nu(z) = \begin{cases}
          \dfrac4{\sqrt{\pi}\,\Gamma\left(\nu+\frac12\right)} \displaystyle \sum_{n \ge 0} \dfrac{(-1)^n(2n+\nu+1)\Gamma(n+\nu+1)}
                  {n!(2n+1)(2n+2\nu+1)}\, I_{2n+\nu+1}(z) &\\ \\
          \sqrt{\dfrac z{2\pi}}\, \displaystyle \sum_{n \ge 0} \dfrac{\left(-\frac{z}{2}\right)^n}{n!\left(n+\frac12\right)} \,
                  I_{n+\nu+\frac12}(z) &\\ \\
          \dfrac{\left(\frac{z}{2}\right)^{\nu+\frac12}}{\Gamma\left(\nu+\frac12\right)}\,
                  \displaystyle \sum_{n \ge 0} \dfrac{\left(-\frac{z}{2}\right)^n}{n!\left(n+\nu+\frac12\right)} \, I_{n+\frac12}(z)
          \end{cases}\, .
    \end{equation}
However, all three series expansions we recognize as Neumann--series built by modified Bessel functions of the
first kind. This kind of series have been intensively studied very recently by the authors and D. Jankov in \cite{BJP}. Exploiting
the appropriate findings of that article, we give new integral expressions for the modified Struve function $\mathbf L_\nu$.

First let us modestly generalize \cite[Theorem 2.1]{BJP} which concerns $\mathfrak N_{1,\nu}(x)$, to integral expression for $\mathfrak N_{\mu, \eta}$ defined by \eqref{NS}, following the same procedure as in \cite{BJP}.

\begin{theorem}
Let $\beta \in {\rm C}^1(\mathbb{R}_+),$ $\beta|_{\mathbb{N}}=\{\beta_n\}_{n\in \mathbb{N}}$, $\mu>0$ and assume that
   \begin{equation} \label{V21}
      \lim_{n \to \infty} \dfrac{|\beta_n|^{\frac1{\mu n}}}n < \dfrac\mu{\rm e}\, .
   \end{equation}
Then, for $\mu, \eta$ such that $\min\{\eta+\tfrac32, \mu+\eta+1\}>0$ and
   \[x \in \left(0, 2\, {\rm min}\left(1,\left(\left( e/\mu\right)^\mu\;\limsup\limits_{n\to \infty}n^{-\mu}
                   |\beta_n|^{1/n}\right)^{-1}\right)\right) := \mathscr I_\beta\,,\]
we have the integral representation
   \begin{equation} \label{V3}
       \mathfrak N_{\mu,\eta}(x) = - \int_1^{\infty} \int_0^{[u]}\dfrac{\partial}{\partial u}
                     \left( \Gamma\left(\mu u+\eta + \frac12\right)\,I_{\mu u + \eta}(x)\right)
                     \mathfrak d_s\left( \frac{\beta(s)}{\Gamma\left(\mu s + \eta + \frac12\right)}\right) {\rm d}u{\rm d}s\,.
   \end{equation}
\end{theorem}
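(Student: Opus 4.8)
The plan is to recognise \eqref{V3} as a \emph{generalized Cahen formula}, in which the exponential kernel $e^{-rt}$ of \eqref{W6} is replaced by the smooth order-function of the modified Bessel factor, and to reach it by Abel (Riemann--Stieltjes) summation rather than by unfolding $I_{\mu n+\eta}$ into a Poisson integral (the latter would introduce a superfluous inner integration and would not produce the Bessel function itself in the kernel). First I would split each term of \eqref{NS} as
\[
\beta_n I_{\mu n+\eta}(x)=b_n\,g(n),\qquad b(u):=\frac{\beta(u)}{\Gamma\!\left(\mu u+\eta+\tfrac12\right)},\quad g(u):=\Gamma\!\left(\mu u+\eta+\tfrac12\right)I_{\mu u+\eta}(x),
\]
so that $\mathfrak N_{\mu,\eta}(x)=\sum_{n\ge1}b_n\,g(n)$. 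Here the hypotheses $\beta\in{\rm C}^1(\mathbb R_+)$ and $\min\{\eta+\tfrac32,\mu+\eta+1\}>0$ are precisely what make both $b$ and $g$ of class $C^1$ on the relevant ranges: $\mu+\eta+1>0$ keeps the Bessel order $\mu u+\eta\ge\mu+\eta>-1$ away from the singular regime for $u\ge1$, while $\eta+\tfrac32>0$ keeps $\mu s+\eta+\tfrac12>-1$ for $s\in(0,[u])$, so that $1/\Gamma(\mu s+\eta+\tfrac12)$ is smooth where $\mathfrak d_s$ will act.

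Next I would establish absolute convergence on $\mathscr I_\beta$. Using the large-order behaviour $I_\nu(x)\sim(2\pi\nu)^{-1/2}\left(\frac{ex}{2\nu}\right)^{\nu}$ together with Stirling for $\Gamma(\mu n+\eta+\tfrac12)$, one finds $g(n)\sim(\mu n)^{-1/2}(x/2)^{\mu n+\eta}$, so that $|g(n)|^{1/(\mu n)}\to x/2$. On the other hand \eqref{V21} forces $|b_n|^{1/(\mu n)}\to\frac{e}{\mu}\lim_{n\to\infty}\frac{|\beta_n|^{1/(\mu n)}}{n}<1$, i.e. $b_n$ decays geometrically; in fact under \eqref{V21} the second entry of the minimum defining $\mathscr I_\beta$ exceeds $1$, whence $\mathscr I_\beta=(0,2)$ and $b_ng(n)$ is dominated by a convergent geometric series for every $x\in\mathscr I_\beta$. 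In particular the counting sum $\mathscr A(t):=\sum_{n\le t}b_n$ converges as $t\to\infty$ while $g(t)\to0$, so the boundary product $\mathscr A(t)g(t)\to0$.

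With convergence in hand I would invoke the Euler--Maclaurin representation of the counting sum quoted in the introduction, $\mathscr A(t)=\sum_{n=1}^{[t]}b_n=\int_0^{[t]}\mathfrak d_s\,b(s)\,{\rm d}s$, and write the series as the Riemann--Stieltjes integral $\sum_{n\ge1}b_ng(n)=\int_{1^-}^{\infty}g(u)\,{\rm d}\mathscr A(u)$. Integration by parts yields
\[
\int_{1^-}^{\infty}g\,{\rm d}\mathscr A=\Big[g\,\mathscr A\Big]_{1^-}^{\infty}-\int_1^\infty\mathscr A(u)\,g'(u)\,{\rm d}u,
\]
and the boundary term vanishes because $\mathscr A(1^-)=0$ and $\mathscr A(u)g(u)\to0$. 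Substituting the integral form of $\mathscr A$ and the definitions of $b$ and $g$ converts $-\int_1^\infty\mathscr A(u)g'(u)\,{\rm d}u$ into exactly \eqref{V3}. Structurally this is \eqref{W6}: the factor $g'(u)=\frac{\partial}{\partial u}\big(\Gamma(\mu u+\eta+\tfrac12)I_{\mu u+\eta}(x)\big)$ plays the role that $\frac{\rm d}{{\rm d}t}e^{-rt}$ plays there, the global minus sign reproducing the $r\,e^{-rt}$ normalisation of Cahen's formula.

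The main obstacle is the justification of these two analytic steps: the Stieltjes integration by parts together with the interchange licensing the $\mathfrak d_s$-integral representation of $\mathscr A$, and, above all, the vanishing of the boundary product $\mathscr A(u)g(u)$ as $u\to\infty$. Both rest on sharp, uniform-in-order asymptotics for $\Gamma(\mu u+\eta+\tfrac12)I_{\mu u+\eta}(x)$ balanced against the growth that \eqref{V21} allots to $\beta_n$; the delicate point is that the geometric decay rate of $g$ is exactly $x/2$, which is why the admissible $x$ are cut off at $2$ (the factor $2\min(1,\cdot)$ in $\mathscr I_\beta$), and one must verify that \eqref{V21} leaves the counting sum too small to overwhelm this decay. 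The regularity hypotheses on $\beta$ and on $(\mu,\eta)$ enter here as well, legitimising both the Euler--Maclaurin form of $\mathscr A$ and the differentiation of $g$ inside the outer integral.
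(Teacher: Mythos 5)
Your argument is correct and reaches \eqref{V3}, but it is not the route the paper takes. The paper's proof of this theorem consists of refining the convergence condition (via the bound $I_\nu(x)<\tfrac{(x/2)^\nu}{\Gamma(\nu+1)}e^{x^2/(4(\nu+1))}$ and the root test) and then declaring the rest ``a copy'' of the proof of \cite[Theorem 2.1]{BJP}, whose engine is the Poisson-type integral \eqref{V6}: one substitutes $I_{\mu n+\eta}(x)=\tfrac{2(x/2)^{\mu n+\eta}}{\sqrt{\pi}\,\Gamma(\mu n+\eta+\frac12)}\int_0^1(1-t^2)^{\mu n+\eta-\frac12}\cosh(xt)\,{\rm d}t$ into \eqref{NS}, interchanges sum and integral so that the inner sum becomes a genuine Dirichlet series with exponents proportional to $\ln\frac{2}{x(1-t^2)}>0$ (this is where the cap $x<2$ enters for the paper), applies Cahen's formula \eqref{W6}, and then carries out the $t$-integration to reassemble $\Gamma(\mu u+\eta+\frac12)I_{\mu u+\eta}(x)$, the factor $r$ in $re^{-rt}$ becoming the $\partial/\partial u$ --- exactly as in the computation of $\Phi_\nu(z)$ in the Kapteyn case later in the paper. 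So your parenthetical claim that the Poisson-integral route ``would not produce the Bessel function itself in the kernel'' is a mischaracterization, though it costs you nothing. Your alternative --- Abel/Stieltjes summation applied directly to $\sum_n b_n g(n)$ with $b_n=\beta_n/\Gamma(\mu n+\eta+\frac12)$ and $g(u)=\Gamma(\mu u+\eta+\frac12)I_{\mu u+\eta}(x)$, with the counting sum written in Euler--Maclaurin form --- is a legitimate and arguably cleaner derivation of the same formula: it avoids the auxiliary $t$-integral and the sum/integral interchange over $(0,1)$, at the price of justifying the vanishing boundary term, which you do correctly via $|g(n)|^{1/(\mu n)}\to x/2$ and $|b_n|^{1/(\mu n)}\to\frac{e}{\mu}\lim_n n^{-1}|\beta_n|^{1/(\mu n)}<1$ under \eqref{V21} (whence, as you observe, $\mathscr I_\beta=(0,2)$, consistent with Theorems 2--4). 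What the paper's heavier machinery buys is uniformity of method: the same Poisson/Gegenbauer-plus-Cahen template carries over to the Kapteyn and Schl\"omilch series of Sections 4--5, where the order and the argument both depend on $n$ and no factorization as simple as $b_ng(n)$ with an explicitly summable $b_n$ is available.
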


\begin{proof} The proof is a copy of the proving procedure delivered for \cite[Theorem 2.1]{BJP}. The only exception is to refine the
convergence condition upon $\mathfrak N_{\mu,\eta}(x)$. By the bound \cite[p. 583]{B3}:
   \[ I_\nu (x) < \frac{\left( \frac x2\right)^\nu}{\Gamma(\nu+1)} \, {e}^{\tfrac{x^2}{4(\nu+1)}},\]
where $x>0$ and $\nu+1>0,$ we have
   \[ \left|\mathfrak N_{\mu, \eta}(x)\right| < \left(\dfrac x2\right)^{\mu+\eta} \,{e}^{\tfrac{x^2}{4(\mu + \eta+1)}}
               \sum_{n \ge 1} \dfrac{\left|\beta_n\right|}{\Gamma(\mu n + \eta + 1)} \, , \]
so, the absolute convergence of the right hand side series suffices for the finiteness of $\mathfrak N_{\mu, \eta}(x)$ on
$\mathscr I_\beta$. However, condition \eqref{V21} ensures the absolute convergence by the Cauchy convergence criterion.

The remaining part of the proof mimicks the one performed for \cite[Theorem 2.1]{BJP}, having in mind that
$\mu =1$ reduces Theorem 2 to the ancestor result \cite[Theorem 2.1]{BJP}.
\end{proof}

\begin{theorem}
If $\nu>0$ and $x \in (0, 2),$ then we have the integral representation
   \begin{align} \label{V4}
       \mathbf L_\nu(x) &- \dfrac{2\Gamma(\nu+2)}{\sqrt{\pi}\,\Gamma\left(\nu+\frac32\right)}\, I_{\nu+1}(x)\nonumber \\
                        &= \int_1^{\infty} \int_0^{[u]}\,\dfrac{\partial}{\partial u}
                           \left( \Gamma\left(2 u+\nu + \tfrac32\right)\,I_{2 u + \nu+1}(x)\right)
                           \mathfrak d_s\left( \frac{\beta(s)}{\Gamma\left(2 s + \nu
                         + \frac32\right)}\right) {\rm d}u\,{\rm d}s\,,
   \end{align}
where
   \[ \beta(s) = - \dfrac{e^{{\rm i}\pi s}(2s + \nu + 1)\Gamma(s+\nu+1)}{\sqrt{\pi}\, \Gamma\left(\nu+\frac12\right)\,
                 \Gamma(s+1)\, \left(s+\frac12\right)\left(s+\nu+\frac12\right)}\, .\]
\end{theorem}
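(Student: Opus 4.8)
The plan is to read off \eqref{V4} as the specialization $\mu=2,\ \eta=\nu+1$ of the integral representation \eqref{V3}, applied to the \emph{first} of the three Neumann series for $\mathbf L_\nu$ in \eqref{V2}. First I would peel off the $n=0$ term of that series. Since $(\nu+1)\Gamma(\nu+1)=\Gamma(\nu+2)$ and $2\nu+1=2(\nu+\tfrac12)=2\Gamma(\nu+\tfrac32)/\Gamma(\nu+\tfrac12)$, the $n=0$ contribution collapses to $\tfrac{2\Gamma(\nu+2)}{\sqrt{\pi}\,\Gamma(\nu+3/2)}\,I_{\nu+1}(x)$, which is exactly the quantity subtracted on the left-hand side of \eqref{V4}. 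Hence that left-hand side is the tail $\sum_{n\ge1}$ of the first series in \eqref{V2}, that is, a genuine Neumann series $\mathfrak N_{2,\nu+1}(x)$ in the sense of \eqref{NS} with $a=1$.

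Second, I would identify the data of \eqref{V3}. Matching the orders $2n+\nu+1=\mu n+\eta$ forces $\mu=2,\ \eta=\nu+1$, so that $\mu u+\eta+\tfrac12=2u+\nu+\tfrac32$ and $\mu s+\eta+\tfrac12=2s+\nu+\tfrac32$, which are precisely the Gamma arguments in \eqref{V4}. Writing $(2n+1)(2n+2\nu+1)=4(n+\tfrac12)(n+\nu+\tfrac12)$, $n!=\Gamma(n+1)$ and $(-1)^n={\rm e}^{{\rm i}\pi n}$, the coefficient of the tail series simplifies to $\beta_n=-\beta(n)$, where $\beta$ is the function displayed in the theorem. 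The extra minus in the definition of $\beta(s)$ is deliberate: choosing the $C^1$ extension $-\beta(s)$ (which agrees with $\beta_n$ at the integers) in \eqref{V3} and carrying the resulting minus through the linear operator $\mathfrak d_s$ cancels the leading $-$ of \eqref{V3} and reproduces the $+$ sign of \eqref{V4}. The use of the smooth extension ${\rm e}^{{\rm i}\pi s}$ of $(-1)^n$ is legitimate because, for integer $N=[u]$, one has $\int_0^{N}\mathfrak d_s a(s)\,{\rm d}s=\sum_{n=1}^{N}a_n$, so the representation sees only the integer values $\beta(n)$, which are real.

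Third, I would verify that the hypotheses of \eqref{V3} hold for this data. For $\nu>0$ the admissibility constraint $\min\{\eta+\tfrac32,\mu+\eta+1\}=\min\{\nu+\tfrac52,\nu+4\}>0$ is automatic and $\beta\in C^1(\mathbb R_+)$. From the ratio asymptotics $\Gamma(n+\nu+1)/\Gamma(n+1)\sim n^\nu$ one gets $|\beta_n|\sim \tfrac{2}{\sqrt{\pi}\,\Gamma(\nu+1/2)}\,n^{\nu-1}$, i.e. polynomial growth; hence $|\beta_n|^{1/(2n)}\to1$, condition \eqref{V21} becomes $\lim_n|\beta_n|^{1/(2n)}/n=0<2/{\rm e}$, and in the definition of $\mathscr I_\beta$ the quantity $\limsup_n n^{-2}|\beta_n|^{1/n}=0$, whence $\min(1,\infty)=1$ and $\mathscr I_\beta=(0,2)$, matching the stated range $x\in(0,2)$.

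The only genuinely delicate bookkeeping is the $n=0$ arithmetic together with the sign handling: one must confirm that packaging $(-1)^n$ as ${\rm e}^{{\rm i}\pi s}$ and inserting the overall minus in $\beta(s)$ exactly absorbs the leading $-$ of \eqref{V3}. Beyond that the argument is a mechanical specialization, since all the analytic content—the Cahen/Euler--Maclaurin representation underlying \eqref{V3} and the convergence estimate based on the bound $I_\nu(x)<(\tfrac x2)^\nu{\rm e}^{x^2/(4(\nu+1))}/\Gamma(\nu+1)$—has already been established in the derivation of \eqref{V3}.
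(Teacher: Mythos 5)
Your proposal is correct and follows essentially the same route as the paper: peel off the $n=0$ term of the first expansion in \eqref{V2}, recognize the tail as $\mathfrak N_{2,\nu+1}(x)$, specialize Theorem~1 with the stated $\beta$, and check that polynomial growth of $\beta_n$ gives \eqref{V21} and $\mathscr I_\beta=(0,2)$. Your sign bookkeeping and your asymptotic $|\beta(s)|\sim \mathrm{const}\cdot s^{\nu-1}$ are in fact slightly more careful than the paper's (which records the exponent as $\nu-2$), but this does not affect the conclusion.
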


\begin{proof} Consider the first Neumann sum expansion of $\mathbf L_\nu(x)$ in \eqref{V2}, that is
   \begin{align*}
      \mathbf L_\nu(x) &= \dfrac4{\sqrt{\pi}\,\Gamma\left(\nu+\frac12\right)} \sum_{n \ge 0}
                          \dfrac{(-1)^n(2n+\nu+1)\Gamma(n+\nu+1)}{n!(2n+1)(2n+2\nu+1)}\, I_{2n+\nu+1}(x)\\
                       &= \dfrac{2\Gamma(\nu+2)}{\sqrt{\pi}\,\Gamma\left(\nu+\frac32\right)}\, I_{\nu+1}(x)\\
                       &\qquad - \dfrac4{\sqrt{\pi}\,\Gamma\left(\nu+\frac12\right)} \sum_{n \ge 1}
                          \dfrac{(-1)^{n-1}(2n+\nu+1)\Gamma(n+\nu+1)}{n!(2n+1)(2n+2\nu+1)}\, I_{2n+\nu+1}(x) \, .
   \end{align*}
Observe that
   \[ \mathbf L_\nu(x) = \dfrac{2\Gamma(\nu+2)}{\sqrt{\pi}\,\Gamma\left(\nu+\frac32\right)}\, I_{\nu+1}(x) - \mathfrak N_{2, \nu+1}(x)\]
in which we specify
   \[ \beta_n = \dfrac{(-1)^{n-1}(2n + \nu + 1)\Gamma(n+\nu+1)}{\sqrt{\pi}\, \Gamma\left(\nu+\frac12\right)\,\Gamma(n+1)\,
                \left(n+\frac12\right)\left(n+\nu+\frac12\right)}\, .\]
Since
   \[ |\beta(s)| \sim \dfrac{2s^{\nu-2}}{\sqrt{\pi}\, \Gamma\left(\nu + \frac12\right)}, \qquad s \to \infty\, ,\]
we deduce (by means of Theorem 1) that  \eqref{V4} is valid for $x\in\mathscr I_\beta = (0,2)$.
\end{proof}

\begin{theorem} For $\nu+2>0$ and $x \in (0, 2)$ we have the integral representation
   \begin{align} \label{V5}
       \mathbf L_\nu(x) &- \sqrt{\dfrac{2x}\pi}\,I_{\nu+\frac12}(x)\\
                        &= \int_1^{\infty} \int_0^{[u]}\,\dfrac{\partial}{\partial u}
                           \left( \Gamma(u+\nu + 1)\,I_{u + \nu+\frac12}(x)\right)
                           \mathfrak d_s\left( \frac{\beta(s)}{\Gamma(s + \nu + 1)}\right) {\rm d}u\,{\rm d}s\,,
   \end{align}
where
   \[ \beta(s) = - \sqrt{ \dfrac x{2\pi}}\,\dfrac{e^{{\rm i}\pi s}\left(\frac x2\right)^s}{\Gamma(s+1)\, (s+\frac12)}\, .\]
\end{theorem}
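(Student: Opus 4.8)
The plan is to repeat the strategy of the proof of Theorem 2, but now starting from the \emph{second} (middle) Neumann-type expansion of $\mathbf L_\nu$ recorded in \eqref{V2}, namely
\[
   \mathbf L_\nu(x) = \sqrt{\frac{x}{2\pi}}\,\sum_{n\ge 0}\frac{\left(-\frac x2\right)^n}{n!\left(n+\frac12\right)}\,I_{n+\nu+\frac12}(x).
\]
First I would peel off the $n=0$ summand. Since $\frac{1}{0!\,(0+\frac12)}=2$ and $2\sqrt{\frac{x}{2\pi}}=\sqrt{\frac{2x}{\pi}}$, that term equals exactly $\sqrt{\frac{2x}{\pi}}\,I_{\nu+\frac12}(x)$, which accounts for the left-hand side of \eqref{V5}. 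What remains is the tail
\[
   \mathbf L_\nu(x)-\sqrt{\frac{2x}{\pi}}\,I_{\nu+\frac12}(x)
     = \sqrt{\frac{x}{2\pi}}\,\sum_{n\ge 1}\frac{\left(-\frac x2\right)^n}{n!\left(n+\frac12\right)}\,I_{n+\nu+\frac12}(x),
\]
which I recognise as a Neumann series $\mathfrak N_{\mu,\eta}(x)$ in the sense of \eqref{NS} with $a=1$, $\mu=1$, $\eta=\nu+\frac12$ and coefficients $\beta_n=\sqrt{\frac{x}{2\pi}}\,\frac{(-x/2)^n}{n!\,(n+\frac12)}$.

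Next I would invoke Theorem 1 with these parameters. The admissible $C^1$ extension of $(\beta_n)$ is obtained by the standard replacements $(-1)^n\mapsto e^{{\rm i}\pi s}$ and $n!\mapsto\Gamma(s+1)$, giving $\sqrt{\frac{x}{2\pi}}\,\frac{e^{{\rm i}\pi s}(x/2)^s}{\Gamma(s+1)(s+\frac12)}$. Here some care with signs is needed: formula \eqref{V3} carries an overall minus and $\mathfrak d_s$ is linear, so I would absorb that minus into the coefficient and display the representation with $\beta(s)=-\sqrt{\frac{x}{2\pi}}\,\frac{e^{{\rm i}\pi s}(x/2)^s}{\Gamma(s+1)(s+\frac12)}$, exactly as stated. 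Substituting $\mu=1,\eta=\nu+\frac12$ into \eqref{V3} turns $\Gamma(\mu u+\eta+\frac12)\,I_{\mu u+\eta}(x)$ into $\Gamma(u+\nu+1)\,I_{u+\nu+\frac12}(x)$ and $\Gamma(\mu s+\eta+\frac12)$ into $\Gamma(s+\nu+1)$, which reproduces the kernel of \eqref{V5}.

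It then remains to check the hypotheses of Theorem 1 on $(0,2)$. By Stirling $(n!)^{-1/n}\sim {\rm e}/n$, so $|\beta_n|^{1/n}\sim \frac{x}{2}\cdot\frac{\rm e}{n}$ and hence both $\frac{|\beta_n|^{1/n}}{n}\to 0<\frac1{\rm e}$ (so that \eqref{V21} holds) and $\limsup_n n^{-\mu}|\beta_n|^{1/n}=0$, which forces $\mathscr I_\beta=(0,2)$. Note that, unlike in Theorem 2, the coefficients carry an explicit factor $(x/2)^n$ together with the prefactor $\sqrt{x/2\pi}$, so $\beta$ is to be read as an $x$-parametrised function; this is harmless, since the $(x/2)^n$ contributes only the bounded factor $x/2$ to $|\beta_n|^{1/n}$ and the resulting $\limsup$ still vanishes, giving $\mathscr I_\beta=(0,2)$ for every admissible $x$. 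Finally the parameter requirement $\min\{\eta+\frac32,\mu+\eta+1\}>0$ becomes $\min\{\nu+2,\nu+\frac52\}=\nu+2>0$, which is precisely the stated hypothesis.

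I expect the only genuinely delicate point to be the sign and domain bookkeeping rather than any hard analysis: one must track the minus in \eqref{V3} against the analytic continuation $(-1)^n\mapsto e^{{\rm i}\pi s}$, and verify that the weaker order restriction $\nu+2>0$ here (as opposed to $\nu>0$ in Theorem 2) is exactly what the constraint $\min\{\eta+\frac32,\mu+\eta+1\}>0$ delivers for the middle expansion. Everything else is a direct transcription of the argument proving Theorem 2.
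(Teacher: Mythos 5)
Your proposal is correct and follows essentially the same route as the paper's proof: peel off the $n=0$ term of the middle expansion in \eqref{V2}, identify the tail as $\mathfrak N_{1,\nu+\frac12}(x)$ with the stated $\beta(s)$, and apply Theorem 1 with $\mu=1$, $\eta=\nu+\tfrac12$. The only cosmetic difference is that you verify the convergence hypothesis via Stirling asymptotics of $|\beta_n|^{1/n}$, whereas the paper checks it through an auxiliary ${}_1F_2$ series; both yield $\mathscr I_\beta=(0,2)$ and the condition $\nu+2>0$.
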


\begin{proof} Let us observe now the second Neumann sum expansion of $\mathbf L_\nu(x)$ in \eqref{V2}:
   \begin{align*}
      \mathbf L_\nu(x) &= \sqrt{\dfrac x{2\pi}}\, \displaystyle \sum_{n \ge 0}
                          \dfrac{\left(-\frac x2\right)^n}{n!\left(n+\frac12\right)} \, I_{n+\nu+\frac12}(x)\\
                       &= \sqrt{\dfrac{2x}\pi}\,I_{\nu+\frac12}(x)
                        - \sqrt{\dfrac x{2\pi}}\, \sum_{n \ge 1}
                          \dfrac{(-1)^{n-1}\left(\frac x2\right)^n}{n!\left(n+\frac12\right)}\, I_{n+\nu+\frac12}(x) \, .
   \end{align*}
In other words,
   \[ \mathbf L_\nu(x) = \sqrt{\dfrac{2x}\pi}\,I_{\nu+\frac12}(x) - \mathfrak N_{1, \nu+\frac12}(x)\]
in which we specify
   \[ \beta(s) = -\sqrt{ \dfrac x{2\pi}}\, \dfrac{e^{{\rm i}\pi s}\left( \frac x2\right)^s}{\Gamma(s+1)\left(s+\frac12\right)}\, .\]
The convergence condition \eqref{V21} reduces to the behavior of the auxiliary series
   \[ \sum_{n \ge 0} \dfrac{|\beta_n|}{\Gamma\left(n + \nu+\frac12\right)} \sim \sqrt{ \dfrac{2x}\pi}\,
      {}_1F_2 \left( \begin{array}{c} \frac12 \\ \frac32,\, \nu+\frac12 \end{array} \left| \dfrac{|x|}2 \right) \right. \, ,\]
which is convergent for all bounded $x \in \mathbb C$, unconditionally upon $\nu.$ Here ${}_1F_2$ denotes the
hypergeometric function defined by series \cite[p. 62]{AAR}
   \[ {}_1F_2 \left( \left.\begin{array}{c} a \\ b_1,\, b_2 \end{array} \right| z \right) =
      \sum_{n \ge 0} \dfrac{(a)_n}{(b_1)_n(b_2)_n}\, \dfrac{z^n}{n!}\, .\]
However, for $\nu>-2$ we have the integral expression \cite[p. 79]{watson}
   \begin{equation} \label{V6}
      I_\nu(z) = \dfrac{2^{1-\nu}z^\nu}{\sqrt{\pi}\, \Gamma\left(\nu+\frac12\right)}\int_0^1(1-t^2)^{\nu-\frac12}\cosh(zt){\rm d}t,
   \end{equation}
where $ z\in\mathbb{C}$ and $\Re(\nu)>-\frac12.$ This was used in the proof of the ancestor result \eqref{V3}, see \cite[Theorem 2.1]{BJP}. Now, we apply Theorem 1 and conclude that \eqref{V5} is valid for $x\in\mathscr I_\beta = (0,2)$.
\end{proof}

The third formula in \eqref{V2} one reduces to the case $\mathfrak N_{1, \frac12}(x)$. However, we shall omit the proof, since the slightly repeating derivation procedure used for \eqref{V5} directly gets the desired integral expression.

\begin{theorem} If $\nu+2>0$ and $x \in (0, 2),$ then we have the integral representation
   \begin{align*}
       \mathbf L_\nu(x) - \dfrac{x^\nu\, \sinh x}{2^\nu\sqrt{\pi}\,\Gamma\left(\nu+\frac32\right)} =
                        \int_1^{\infty} \int_0^{[u]}\,\dfrac{\partial}{\partial u}
                           \left( \Gamma\left(u+1\right)\,I_{u + \frac12}(x)\right)
                           \mathfrak d_s\left( \frac{\beta(s)}{\Gamma\left(s + 1\right)}\right) {\rm d}u\,{\rm d}s\,,
   \end{align*}
where
   \[ \beta(s) = - \dfrac{(\frac x2)^{\nu+\frac12}}{\Gamma\left(\nu+\frac12\right)}\,
                 \dfrac{e^{{\rm i}\pi s}\left( \frac x2\right)^s}{\Gamma(s+1)\left(s + \nu + \frac12\right)}\, .\]
\end{theorem}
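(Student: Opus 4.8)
The plan is to run, for the third expansion in \eqref{V2}, the same two-step scheme used for Theorems 2 and 3: peel off the $n=0$ summand, recognise the tail as an extended Neumann series of the form \eqref{NS}, and invoke Theorem 1. First I would isolate the $n=0$ term
\[
   \frac{\left(\frac x2\right)^{\nu+\frac12}}{\Gamma\left(\nu+\frac12\right)}\,
   \frac{1}{\nu+\frac12}\,I_{\frac12}(x),
\]
and reduce it to closed form via $I_{\frac12}(x)=\sqrt{2/(\pi x)}\,\sinh x$ together with $\left(\nu+\frac12\right)\Gamma\left(\nu+\frac12\right)=\Gamma\left(\nu+\frac32\right)$. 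Collecting the powers of $2$ and $x$ collapses this summand to $\dfrac{x^{\nu}\sinh x}{2^{\nu}\sqrt{\pi}\,\Gamma\left(\nu+\frac32\right)}$, which is exactly the term carried to the left-hand side of the asserted identity.

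Second, what remains after subtracting the $n=0$ term is $-\mathfrak N_{1,\frac12}(x)$ with $\mu=1$, $\eta=\frac12$ and coefficients
\[
   \beta_n=\frac{\left(\frac x2\right)^{\nu+\frac12}}{\Gamma\left(\nu+\frac12\right)}\,
   \frac{(-1)^{n-1}\left(\frac x2\right)^{n}}{\Gamma(n+1)\left(n+\nu+\frac12\right)}.
\]
Writing $(-1)^{n-1}=-e^{\mathrm i\pi n}$ and continuing $n\mapsto s$ produces precisely the $\beta(s)$ of the statement. The point to get right here is the sign bookkeeping: since \eqref{V3} returns $\mathfrak N_{\mu,\eta}$ with a leading minus, the decomposition $\mathbf L_\nu(x)-(\text{$n=0$ term})=-\mathfrak N_{1,\frac12}(x)$ turns that minus into the plus standing in front of the double integral, exactly as for \eqref{V5}.

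Third, I would check the hypotheses of Theorem 1. The index condition $\min\{\eta+\frac32,\mu+\eta+1\}=\min\{2,\frac52\}=2>0$ holds unconditionally. For \eqref{V21} I would argue as for \eqref{V5}: the auxiliary series $\sum_{n\ge0}|\beta_n|/\Gamma\left(n+\frac32\right)$ is, up to a constant, a generalized hypergeometric ${}_1F_2$ series (top parameter $\nu+\tfrac12$, bottom parameters $\tfrac32,\nu+\tfrac32$) in $\tfrac{|x|}{2}$, hence entire in $x$ and convergent for every bounded argument irrespective of $\nu$. A Stirling estimate gives $|\beta_n|^{1/n}\sim \tfrac{x\,\mathrm e}{2n}$, whence the root-test quantity $\limsup_{n\to\infty}n^{-\mu}|\beta_n|^{1/n}=0$ and the defining expression for $\mathscr I_\beta$ collapses to $(0,2)$. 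Inserting $\mu=1$, $\eta=\frac12$ and $\beta(s)$ into \eqref{V3} and transporting the output through the decomposition then yields the claimed representation on $\mathscr I_\beta=(0,2)$.

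The main obstacle I anticipate is not convergence but the regularity demanded by Theorem 1, namely $\beta\in\mathrm C^1(\mathbb R_+)$: the factor $s+\nu+\frac12$ in the denominator of $\beta(s)$ has a simple zero at $s=-\nu-\frac12$, which lies in $(0,\infty)$ precisely when $\nu<-\frac12$. Thus the application of Theorem 1 is transparent for $\nu>-\frac12$ (the range in which \eqref{W1} and the Poisson-type integral \eqref{V6} are available), and for the lower part of the stated range $\nu>-2$ one would have to verify separately that the interpolating coefficient can be chosen regular on $\mathbb R_+$, so that the Euler--Maclaurin/Cahen reduction behind \eqref{V3} still applies. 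Apart from this point, the derivation is structurally identical to that of \eqref{V5}, which is why it can be compressed.
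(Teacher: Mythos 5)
Your argument is exactly the one the paper intends: the authors explicitly omit the proof of this theorem, remarking only that the third expansion in \eqref{V2} reduces to $\mathfrak N_{1,\frac12}(x)$ and that the derivation repeats the one given for \eqref{V5} --- which is precisely your scheme of peeling off the $n=0$ summand (your closed form $x^{\nu}\sinh x/(2^{\nu}\sqrt{\pi}\,\Gamma(\nu+\tfrac32))$ and your $\beta(s)$ both check out) and then invoking Theorem 1 with $\mu=1$, $\eta=\tfrac12$ on $\mathscr I_\beta=(0,2)$. Your closing caveat about the pole of $\beta(s)$ at $s=-\nu-\tfrac12$ when $\nu<-\tfrac12$ is a fair observation about the stated range $\nu>-2$, but it concerns the theorem's hypotheses rather than the proof route, which matches the paper's.
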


Now, applying the integral representation \eqref{V6} we derive another integral expression for $\mathbf L_\nu(x)$ in terms of
hypergeometric functions in the integrand.

\begin{theorem} Let $\nu>-\tfrac12$. Then for $x>0$ we have
   \begin{align*}
      \mathbf L_\nu(x) &= \dfrac{x^{\nu+1}\Gamma(\nu+2)}{\sqrt{\pi}\,2^{2\nu-\frac12} \Gamma\left(\nu + \frac32\right)
                          \Gamma\left( \frac\nu2+\frac34\right)\Gamma\left(\frac\nu2+\frac54\right)} \int_0^1 (1-t^2)^{\nu+\frac12}\cosh(xt)\,
                          \nonumber \\
                       &\qquad \times {}_4F_5 \left(\left. \begin{array}{c}
                          \frac12,\,\frac{\nu+3}2,\, \nu+\frac12,\, \nu+1 \\ \\ \frac32, \frac{\nu+1}2, \frac\nu2+\frac34,
                          \frac\nu2+\frac54, \nu+\frac34 \end{array} \right| - \frac{x^2}{16}(1-t^2)^2\right)\, {\rm d}t\, ,
   \end{align*}
where
   \[ {}_4F_5 \left(\left.\begin{array}{c} a_1,\,a_2,\,a_3,\, a_4 \\ b_1, b_2, b_3, b_4, b_5 \end{array} \right|  z \right)
         = \sum_{n \ge 0} \dfrac{\prod\limits_{j=1}^4(a_j)_n}{\prod\limits_{j=1}^5(b_j)_n}\, \dfrac{z^n}{n!}\,\]
stands for the generalized hypergeometric function with four upper and five lower parameters.
\end{theorem}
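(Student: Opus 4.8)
The plan is to begin with the first Neumann--series expansion of $\mathbf L_\nu$ in \eqref{V2} and to feed the Poisson--type integral \eqref{V6} into each modified Bessel term. Taking order $2n+\nu+1$ and argument $x$ in \eqref{V6} gives
\[ I_{2n+\nu+1}(x)=\frac{2^{-2n-\nu}x^{2n+\nu+1}}{\sqrt\pi\,\Gamma\!\left(2n+\nu+\tfrac32\right)}\int_0^1(1-t^2)^{2n+\nu+\frac12}\cosh(xt)\,{\rm d}t, \]
which is legitimate because $\nu>-\tfrac12$ forces every order $2n+\nu+1$ to lie above $-\tfrac12$. Substituting this into the series and extracting from the summation the factor $(1-t^2)^{\nu+\frac12}\cosh(xt)$ together with the $n$--independent constant $2^{-\nu}x^{\nu+1}$, I obtain (formally for the moment) an integral over $[0,1]$ whose integrand is $(1-t^2)^{\nu+\frac12}\cosh(xt)$ times a power series in $-\tfrac{x^2}{16}(1-t^2)^2$.

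The first genuine step is to legitimise the interchange of summation and integration. On $[0,1]$ one has $(1-t^2)^{2n+\nu+\frac12}\le(1-t^2)^{\nu+\frac12}$, so every integral is dominated by the single finite integral $\int_0^1(1-t^2)^{\nu+\frac12}\cosh(xt)\,{\rm d}t$ (finite since $\nu>-\tfrac12$), while the accompanying numerical coefficients decay factorially because $\Gamma(2n+\nu+\tfrac32)$ outgrows $n!\,\Gamma(n+\nu+1)$; hence the numerical series of absolute values converges for every fixed $x>0$, exactly in the spirit of the ${}_1F_2$--estimate used in the proof of Theorem 3. Tonelli's theorem then sanctions the exchange and makes the interior power series representation rigorous.

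It remains to identify that interior power series with the asserted ${}_4F_5$, which I would do through its term ratio. Writing $d_n$ for the general summand, a direct computation yields
\[ \frac{d_{n+1}}{d_n}=\frac{\left(n+\frac12\right)\left(n+\frac{\nu+3}2\right)\left(n+\nu+\frac12\right)(n+\nu+1)}{\left(n+\frac32\right)\left(n+\frac{\nu+1}2\right)\left(n+\nu+\frac32\right)\left(n+\frac\nu2+\frac34\right)\left(n+\frac\nu2+\frac54\right)}\cdot\frac1{n+1}\left(-\frac{x^2}{16}(1-t^2)^2\right), \]
where the half--integer lower entries $\tfrac\nu2+\tfrac34,\tfrac\nu2+\tfrac54$ are produced by splitting $\Gamma(2n+\nu+\tfrac32)$ with the Legendre duplication formula and the remaining entries arise from writing each linear factor $2n+c$ as $2\!\left(n+\tfrac c2\right)$. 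This is precisely the term ratio of the generalised hypergeometric series with upper parameters $\tfrac12,\tfrac{\nu+3}2,\nu+\tfrac12,\nu+1$ and lower parameters $\tfrac32,\tfrac{\nu+1}2,\nu+\tfrac32,\tfrac\nu2+\tfrac34,\tfrac\nu2+\tfrac54$, evaluated at $-\tfrac{x^2}{16}(1-t^2)^2$; the fifth lower entry thus emerges as $\nu+\tfrac32$ from the denominator factor $2n+2\nu+1$. Finally the leading term $d_0$, multiplied by the extracted constant $\tfrac{4\cdot2^{-\nu}x^{\nu+1}}{\pi\,\Gamma(\nu+\frac12)}$, collapses---again through the duplication formula, now applied to $\Gamma(\nu+\tfrac32)$---to the prefactor displayed in the statement.

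The one place demanding care is this last bookkeeping: two separate Legendre duplications occur (one for the $\Gamma$ inside the sum, one for the overall normalisation), and they generate competing powers of $2$, so a single misplaced factor simultaneously corrupts the constant and the list of lower parameters. Everything else---applying \eqref{V6}, the domination estimate, and reading off the Pochhammer symbols---is routine.
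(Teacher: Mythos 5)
Your proposal is correct and follows essentially the same route as the paper: expand $\mathbf L_\nu$ by the first Neumann series in \eqref{V2}, insert the Poisson-type integral \eqref{V6} for each $I_{2n+\nu+1}$, interchange sum and integral, and repackage the coefficients via $(A)_n(n+A)=A(A+1)_n$ and the Legendre duplication of $\Gamma(2n+\nu+\tfrac32)$; your explicit domination argument for the interchange is a welcome addition the paper omits. One point you should have flagged explicitly: the lower-parameter list you derive, $\tfrac32,\tfrac{\nu+1}2,\nu+\tfrac32,\tfrac\nu2+\tfrac34,\tfrac\nu2+\tfrac54$, does not coincide with the one in the statement, which has $\nu+\tfrac34$ in place of $\nu+\tfrac32$. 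Checking the term ratio (the factor $\tfrac{2n+2\nu+1}{2n+2\nu+3}=\tfrac{n+\nu+1/2}{n+\nu+3/2}$) confirms that your value $\nu+\tfrac32$ is the correct one and that the printed $\nu+\tfrac34$ — which also appears in the final display of the paper's own proof — is a typo; as written, your argument silently proves a formula that differs from the one asserted, so you should state that the theorem requires this correction.
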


\begin{proof} Consider the first Bessel function series expansion for $\mathbf L_\nu(x)$ given in \eqref{V2}. Applying
{\it mutatis mutandis} the integral representation formula \eqref{V6}, the Pochhammer symbol technique, the familiar formula ${(A)_n}(n+A) = A{(A+1)_n},$ $n\in\{0,1,\dots\},$ and the duplication formula
   \[ \Gamma(2z) = \dfrac{2^{2z-1}}{\sqrt{\pi}}\, \Gamma(z)\Gamma\left(z+\tfrac12\right) \]
to the summands, we get the chain of equivalent legitimate transformations:
   \begin{align*}
      \mathbf L_\nu(x) &= \dfrac8{\sqrt{\pi}\, \Gamma\left(\nu+\frac12\right)} \sum_{n \ge 0} \dfrac{(-1)^n (2n+\nu+1)\Gamma(n+\nu+1)}
              {n!(2n+1)(2n+2\nu+1)}\dfrac{2\left(\frac x2\right)^{2n+\nu+1}}{\sqrt{\pi}\,\Gamma\left(2n+\nu+\frac32\right)}\\
           &\qquad \times \int_0^1 (1-t^2)^{2n+\nu+\frac12} \cosh(xt)\, {\rm d}t \\
           &= \dfrac{4\left(\frac x2\right)^{\nu+1}}{\pi \Gamma\left(\nu + \frac12\right)}
              \int_0^1 (1-t^2)^{\nu +\frac12} \cosh(xt)\\
           &\qquad \times \sum_{n \ge 0}
              \dfrac{\left(n + \frac{\nu+1}2\right) \Gamma(n + \nu +1)\, \left[ - \frac{x^2}4\,(1-t^2)^2\right]^n}
              {\left(n + \frac12\right)\left(n + \nu + \frac12\right)\Gamma\left(2n + \nu+\frac32\right)\, n!}\, {\rm d}t\\
           &= \dfrac{4\left( \frac x2\right)^{\nu+1}(\nu+1)\Gamma(\nu+1)}{\sqrt{\pi}\, \left(\nu+\frac12\right)\Gamma\left(\nu
            + \frac12\right) \Gamma\left(\frac\nu2  + \frac34\right)\Gamma\left(\frac\nu2+\frac54\right)} \,
              \int_0^1 (1-t^2)^{\nu +\frac12} \cosh(xt)\\
           &\qquad \times \sum_{n \ge 0}
              \dfrac{(\frac12)_n(\frac{\nu+3}2)_n(\nu+\frac12)_n(\nu+1)_n\,\left[ - \frac{x^2}{16}\,(1-t^2)^2\right]^n}
              {{\left(\frac32\right)}_n{\left(\frac{\nu+1}2\right)}_n{\left(\frac\nu2+\frac34\right)}_n
              {\left(\frac\nu2+\frac54\right)}_n{\left(\nu+\frac34\right)}_n\, n!}\, {\rm d}t,
   \end{align*}
which proves the assertion.
\end{proof}

By virtue of similar manipulations presented above, we conclude the following results.

\begin{theorem} Let $\nu>-\tfrac12$ and $x>0$. Then there holds
   \[ \mathbf L_\nu(x) = \begin{cases}
           \dfrac{x^{\nu+1}}{2^{\nu-1}\pi\,\Gamma(\nu+1)} \displaystyle \int_0^1 (1-t^2)^{\nu}\cosh(xt)\,
                          {}_1F_2 \left(\left. \begin{array}{c}
                          \frac12 \\  \frac32,\, \nu+1 \end{array} \right| - \frac{x^2}4(1-t^2)\right)\, {\rm d}t \\
           \dfrac{x^{\nu+1}}{\sqrt{\pi}\,2^{\nu+\frac12}\,\Gamma\left(\nu+\frac12\right)} \displaystyle \int_0^1 \cosh(xt)\,
                          {}_1F_2 \left(\left. \begin{array}{c} \nu+\frac12 \\ 1,\, \nu+\frac32 \end{array} \right| - \frac{x^2}4(1-t^2)
                          \right)\, {\rm d}t
                \end{cases}\, .\]
\end{theorem}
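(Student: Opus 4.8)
The plan is to follow the route of Theorem 5 almost verbatim, except that now I would feed the \emph{second} and \emph{third} Neumann--type expansions of $\mathbf L_\nu$ in \eqref{V2} into the machinery instead of the first. For the first asserted representation I would start from
\[
  \mathbf L_\nu(x) = \sqrt{\frac{x}{2\pi}}\sum_{n\ge 0}\frac{\left(-\frac x2\right)^n}{n!\left(n+\frac12\right)}\,I_{n+\nu+\frac12}(x),
\]
and for the second from the third series, whose summands carry $I_{n+\frac12}(x)$ together with the factor $1/(n!\,(n+\nu+\frac12))$. In both cases the single analytic device is the Poisson--type integral \eqref{V6}, applied termwise to each modified Bessel function.

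Applying \eqref{V6} with index $n+\nu+\frac12$ in the first case introduces the denominator $\Gamma(n+\nu+1)$ and the weight $(1-t^2)^{n+\nu}$, whereas the index $n+\frac12$ in the second case introduces $\Gamma(n+1)=n!$ and the weight $(1-t^2)^n$. The decisive simplification relative to Theorem 5 is that \emph{neither} index has the doubled form $2n+\cdots$, so the Legendre duplication formula is never triggered; this is precisely why both cases contract to a single ${}_1F_2$ rather than to the ${}_4F_5$ produced by the first series. After the substitution the $t$--integrand becomes $(1-t^2)^\nu\cosh(xt)$ (respectively $\cosh(xt)$, since here the full power $(1-t^2)^n$ is absorbed into the series) multiplied by a power series in $z=-\tfrac{x^2}4(1-t^2)$.

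To justify interchanging summation and integration I would note that $I_{n+\nu+\frac12}(x)\sim (x/2)^{n+\nu+\frac12}/\Gamma(n+\nu+\frac32)$ and $I_{n+\frac12}(x)\sim (x/2)^{n+\frac12}/\Gamma(n+\frac32)$ as $n\to\infty$, so both series converge absolutely and uniformly in $t\in[0,1]$ for every fixed $x>0$; Fubini then applies and, unlike Theorems 2--4, no upper bound on $x$ is required, only $\nu>-\frac12$ (so that \eqref{V6} holds) and $x>0$. It remains to recognise the inner power series as a ${}_1F_2$. For this I would invoke the elementary Pochhammer identities $\tfrac1{n+\frac12}=2\,(\tfrac12)_n/(\tfrac32)_n$ and $\Gamma(n+\nu+1)=\Gamma(\nu+1)(\nu+1)_n$ in the first case, and $\tfrac1{n+\nu+\frac12}=(\nu+\tfrac12)_n/\big[(\nu+\tfrac12)(\nu+\tfrac32)_n\big]$ together with $(n!)^2=(1)_n\,n!$ in the second; these convert the respective series into ${}_1F_2(\tfrac12;\tfrac32,\nu+1\,|\,z)$ and ${}_1F_2(\nu+\tfrac12;1,\nu+\tfrac32\,|\,z)$, and gathering the constant factors (using $\Gamma(\nu+\frac32)=(\nu+\frac12)\Gamma(\nu+\frac12)$) produces the prefactors displayed in the statement.

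The steps are each routine, so the only real hazard is the bookkeeping of the numerical prefactors, where a stray power of $2$ or a shift of the $\Gamma$--argument would corrupt the normalisation. As a safeguard I would check both formulae against the known leading asymptotics $\mathbf L_\nu(x)\sim x^{\nu+1}/\big(2^{\nu}\sqrt{\pi}\,\Gamma(\nu+\tfrac32)\big)$ as $x\to 0^+$: each hypergeometric factor tends to $1$, the surviving $t$--integral is evaluated by a Beta integral in the first case and equals $1$ in the second, and the remaining constant must reproduce this asymptotic coefficient.
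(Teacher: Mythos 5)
Your proposal is correct and is exactly the route the paper intends: the paper omits the proof of this theorem, stating only that it follows the procedure of the preceding theorem applied to the second and third Neumann expansions in \eqref{V2}, which is precisely what you carry out, and your Pochhammer reductions and convergence justification are sound. One remark: your final sanity check is worth actually performing, because carrying the constants through the third expansion gives the prefactor $x^{\nu+1}/\bigl(2^{\nu}\sqrt{\pi}\,\Gamma(\nu+\tfrac32)\bigr)$ in the second formula, not the printed $x^{\nu+1}/\bigl(2^{\nu+\frac12}\sqrt{\pi}\,\Gamma(\nu+\tfrac12)\bigr)$ — the two differ by the factor $(\nu+\tfrac12)/\sqrt{2}$, and only the former is compatible with $\mathbf L_\nu(x)\sim x^{\nu+1}/\bigl(2^{\nu}\sqrt{\pi}\,\Gamma(\nu+\tfrac32)\bigr)$ as $x\to0^{+}$, so your method in fact corrects a misprint in the stated theorem rather than reproducing it.
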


The proof of Theorem 6 follows from the same proving procedure as the previous theorem but now considering the second and third series
expansion results in \eqref{V2}, so we shall omit the proofs of these integral representations.

\section{\bf Integrals containing $\Omega(x)$--function and Mathieu series \textit{\textbf{via}} summation of
$\mathfrak T_{\,\,\,\nu}^{I, \mathbf L}(x)$ }

By virtue of the Sonin--Gubler formula \eqref{W1} we establish the convergence conditions for the generalized Schl\"omilch series
$\mathfrak T_{\nu, \mu}^{I, \mathbf L}(x)$ and $\widetilde{\mathfrak T}_{\nu, \mu}^{I, \mathbf L}(x)$. As for $n$ enough large we have
   \begin{equation} \label{W0}
      I_\nu(an) - \mathbf L_\nu(an) = \dfrac{2n^{\nu-1}}\pi\, \int_0^\infty \dfrac{J_\nu(ax)\, {\rm d}x}{(1+n^{-2}x^2)x^\nu}
                = \mathscr O\left(n^{\nu-1}\right)\, ,
   \end{equation}
we immediately conclude that the following equiconvergences hold true
   \[ \mathfrak T_{\nu, \mu}^{I, \mathbf L}(x) \sim \zeta(\mu-\nu+1),\qquad
      \widetilde{\mathfrak T}_{\nu, \mu}^{I, \mathbf L}(x) \sim \eta(\mu-\nu+1)\, ,\]
that is, $\mathfrak T_{\nu, \mu}^{I, \mathbf L}(x)$ converges for $\mu>\nu>0$, while $\widetilde{\mathfrak T}_{\nu, \mu}^{I,
\mathbf L}(x)$ converges for $\mu +1 > \nu>0$. On the other hand, we connect $\widetilde{\mathfrak T}_{\nu, \nu}^{I, \mathbf L}(x)$ and the Butzer--Flocke--Hauss (BFL) complete
Omega function \cite[Definition 7.1]{butzer}
   \[ \Omega(w) = 2 \int_{0+}^{\frac{1}{2}} \sinh(wu)\cot(\pi u)\,{\rm d}u, \qquad  w\in \mathbb C\,.\]
By the Hilbert transform terminology, $\Omega(w)$ is the Hilbert transform $\mathscr H(e^{-wx})_1(0)$ at $0$ of the $1$-periodic function $(e^{-wx})_1$ defined by the periodic continuation of the following exponential function \cite[p. 67]{butzer}: $e^{-xw},$ $x\in \left[-\tfrac12,\tfrac12\right),$ $w\in \mathbb C,$ that is,
   \[ \mathscr H(e^{-xw})_1(0) := \text{P.V.} \int_{-\frac{1}{2}}^{\frac{1}{2}}e^{wu}\cot(\pi u) \, {\rm d}u \equiv \Omega(w) \]
where the integral is to be understood in the sense of Cauchy's  Principal Value at zero, see e.g. \cite{BPS, PSri}.

On the other side by differentiating once \eqref{W1} with respect to $n$ we get a tool to obtain Mathieu series
$S(x)$ (introduced by \'Emile Leonard Mathieu \cite{mathieu}) and its alternating variant $\widetilde S(x)$
(introduced by Pog\'any, Srivastava and Tomovski \cite{PST}), which are defined as follows
   \[ S(x) = \sum_{n \ge 1} \dfrac{2n}{(x^2+n^2)^2}\,, \quad
      \widetilde S(x) = \sum_{n \ge 1} \dfrac{2(-1)^{n-1}n}{(x^2+n^2)^2}.\]
Closed integral expression for $S(r)$ was considered by Emersleben \cite{Emers} and subsequently by Elbert \cite{elbert}, while for
$\widetilde S_\mu(x)$ integral representation has been given by Pog\'any, Srivastava and Tomovski \cite{PST}:
   \begin{align} \label{W4}
      S(x) &= \dfrac1x \int_0^\infty \dfrac{t\, \sin(xt)}{e^t-1}\,{\rm d}t, \\ \label{W5}
      \widetilde S(x) &= \dfrac1x \int_0^\infty \dfrac{t\, \sin(xt)}{e^t+1}\,{\rm d}t \, .
   \end{align}
Another kind integral expressions for underlying Mathieu series can be found in \cite{PST}.

\begin{theorem} Assume that $\Re(\nu)>0$ and $a>0$. Then we have
   \[ \int_0^\infty \dfrac{J_\nu(ax)\,\Omega(2\pi x)}{ x^\nu\,\sinh(\pi x)}\,{\rm d}x
            = \nu \int_0^\infty e^{-\nu t}\, \int_0^{[e^t]} \mathfrak d_u\left( e^{{\rm i}\pi u}\left(\mathbf L_\nu(au)
            - I_\nu(au)\right)\right)\, {\rm d}t\,{\rm d}u\, .\]
\end{theorem}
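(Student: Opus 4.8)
The plan is to read the right-hand side as a Cahen--Laplace integral and to compute the left-hand side directly by means of the Sonin--Gubler formula \eqref{W1}. First I would recognize the iterated integral on the right as the special case of Cahen's formula \eqref{W6} in which the decay rate is $r=\nu$, the increasing divergent sequence is $\lambda_n=\log n$ (hence $\lambda^{-1}(t)=e^{t}$ and $e^{-r\lambda_n}=n^{-\nu}$), and the coefficient sequence is interpolated by $a(u)=e^{{\rm i}\pi u}\bigl(\mathbf L_\nu(au)-I_\nu(au)\bigr)$. Because $e^{{\rm i}\pi n}=(-1)^{n}$, this identifies the right-hand side with the Dirichlet series
\[ \sum_{n\ge1}\frac{(-1)^{n}\bigl(\mathbf L_\nu(an)-I_\nu(an)\bigr)}{n^{\nu}}=\widetilde{\mathfrak T}_{\nu,\nu}^{I,\mathbf L}(a), \]
which converges since the condition $\mu+1>\nu>0$ for $\widetilde{\mathfrak T}_{\nu,\mu}^{I,\mathbf L}$ holds at $\mu=\nu$ whenever $\Re(\nu)>0$. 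The theorem therefore reduces to the single identity $\widetilde{\mathfrak T}_{\nu,\nu}^{I,\mathbf L}(a)=\int_0^\infty J_\nu(ax)\,\Omega(2\pi x)\,\bigl(x^{\nu}\sinh(\pi x)\bigr)^{-1}\,{\rm d}x$.

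Next I would insert the Sonin--Gubler representation termwise. Rearranging \eqref{W1} (equivalently \eqref{W0}) gives
\[ I_\nu(an)-\mathbf L_\nu(an)=\frac{2n^{\nu+1}}{\pi}\int_0^\infty\frac{J_\nu(ax)}{(x^2+n^2)\,x^{\nu}}\,{\rm d}x, \]
so the $n$-th summand of $\widetilde{\mathfrak T}_{\nu,\nu}^{I,\mathbf L}(a)$ becomes $\tfrac{2}{\pi}(-1)^{n-1}n\int_0^\infty J_\nu(ax)\,(x^2+n^2)^{-1}x^{-\nu}\,{\rm d}x$. Interchanging summation and integration would then leave
\[ \widetilde{\mathfrak T}_{\nu,\nu}^{I,\mathbf L}(a)=\frac{2}{\pi}\int_0^\infty\frac{J_\nu(ax)}{x^{\nu}}\left(\sum_{n\ge1}\frac{(-1)^{n-1}n}{x^2+n^2}\right){\rm d}x, \]
which reduces everything to the evaluation of the inner series.

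To evaluate that series I would derive the series representation of the Butzer--Flocke--Hauss function. Expanding the $1$-periodic continuation of $e^{wu}$ on $[-\tfrac12,\tfrac12)$ into its Fourier series yields the coefficients $c_k=2(-1)^{k}\sinh(w/2)/(w-2\pi{\rm i}k)$, and an elementary principal-value computation gives $\text{P.V.}\int_{-1/2}^{1/2}e^{2\pi{\rm i}ku}\cot(\pi u)\,{\rm d}u={\rm i}\,\mathrm{sgn}(k)$. Combining the two and pairing the indices $\pm k$ produces
\[ \Omega(w)=-8\pi\sinh\!\left(\tfrac{w}{2}\right)\sum_{k\ge1}\frac{(-1)^{k}k}{w^{2}+4\pi^{2}k^{2}}, \]
so that, on setting $w=2\pi x$,
\[ \sum_{k\ge1}\frac{(-1)^{k-1}k}{x^{2}+k^{2}}=\frac{\pi}{2}\,\frac{\Omega(2\pi x)}{\sinh(\pi x)}. \]
Substituting this last identity into the previous display collapses the constant $\tfrac{2}{\pi}\cdot\tfrac{\pi}{2}=1$ and returns exactly the left-hand integral, which finishes the proof.

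The main obstacle will be justifying the interchange of summation and integration, since the inner series $\sum(-1)^{n-1}n/(x^{2}+n^{2})$ converges only conditionally and the factor $n$ cancels the $n^{-\nu}$ gain in the coefficients. I would overcome this by summation by parts (pairing consecutive terms) to produce a majorant that is absolutely convergent and locally uniform in $x$, while the estimate $I_\nu(an)-\mathbf L_\nu(an)=\mathscr O(n^{\nu-1})$ from \eqref{W0} controls the tail; integrability at the endpoints is then immediate from $J_\nu(ax)x^{-\nu}=\mathscr O(1)$ as $x\to0^{+}$ for $\Re(\nu)>-\tfrac12$ and from the exponential decay of $\Omega(2\pi x)/\sinh(\pi x)$ as $x\to\infty$, so that dominated convergence legitimizes the exchange.
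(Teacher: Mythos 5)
Your proposal is correct and follows essentially the same route as the paper: identify the right-hand side via Cahen's formula \eqref{W6} with $\lambda_n=\ln n$ as the Dirichlet series $\widetilde{\mathfrak T}_{\nu,\nu}^{I,\mathbf L}(a)$, apply the Sonin--Gubler formula \eqref{W1} termwise after multiplying by $(-1)^{n-1}n$, and collapse the resulting inner sum $\sum_{n\ge1}(-1)^{n-1}n/(x^2+n^2)$ using the partial-fraction representation $\pi\,\Omega(2\pi x)/\sinh(\pi x)$. The only differences are cosmetic: you rederive that partial-fraction expansion from the Fourier coefficients of the periodic exponential (the paper simply cites Butzer's Theorem~1.3), and you supply the justification for interchanging summation and integration, which the paper leaves implicit.
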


\begin{proof} When we multiply \eqref{W1} by $(-1)^{n-1}n$ and sum up all three series with respect to $n \in \mathbb N$, the
following partial-fraction representation of the Omega function \cite[Theorem 1.3]{butzer}
   \[ \frac{\pi \Omega(2\pi w)}{\sinh(\pi w)} = \sum_{n=1}^\infty \frac{2(-1)^{n-1} \,n}{n^2+w^2}\, .\]
immediately give
   \[ \int_0^\infty \dfrac{J_\nu(ax)\,\Omega(2\pi x)}{ x^\nu\,\sinh(\pi x)}\,{\rm d}x
            = \sum_{n \ge 1} (-1)^{n-1} n^{-\nu}\,\left( I_\nu(an) - \mathbf L_\nu(an)\right)
            = \widetilde{\mathfrak T}_{\nu, \nu}^{I, \mathbf L}(a) \, .\]
We recognize the right--hand--side sums as Dirichlet series of $I_\nu$ and $\mathbf L_\nu,$ respectively. Being
   \[ \sum_{n \ge 1} (-1)^{n-1} n^{-\nu}I_\nu(an) = \sum_{n \ge 1} e^{{\rm i}\pi(n-1)}I_\nu(an) e^{-\nu \ln n}, \qquad \Re(\nu)>0\, , \]
we get
   \[ \sum_{n \ge 1} (-1)^{n-1} n^{-\nu}I_\nu(an) = \nu \int_0^\infty e^{-\nu t}
                     \sum_{n \colon \ln n \le t} e^{{\rm i}\pi(n-1)}I_\nu(an)\,{\rm d}t .\]
So, making use of the Euler--Maclaurin summation \eqref{W6} to the Cahen's formula we deduce
   \[ \sum_{n \ge 1} (-1)^{n-1} n^{-\nu}I_\nu(an) =
      - \nu\,  \int_0^\infty \int_0^{[e^t]}\,e^{-\nu t}\,  \mathfrak d_u\left( e^{{\rm i}\pi u}I_\nu(au) \right)\,{\rm d}t\,{\rm d}u ;\]
and repeating the procedure to the second Dirichlet series containing $\mathbf L_\nu(an),$ the proof is complete.
\end{proof}

The next result concerns a hypergeometric integral, which one we integrate by means of Schl\"omilch series of
modified Bessel and modified Struve functions.

\begin{theorem} Let $\Re(\nu)>0$ and $a>0$. Then we have
   \begin{align*}
      \int_0^\infty J_\nu(ax)\,S(x) \dfrac{{\rm d}x}{ x^\nu} &=
             \dfrac{\sqrt{\pi} a^{\nu+2}}{2^{\nu+1} \Gamma(\nu+\frac12)} \int_0^1 \dfrac{t^2}{e^{at}-1}\,
             {}_2F_1 \left(\left. \begin{array}{c} \frac12, \frac12-\nu\\ \frac32 \end{array} \right| t^2 \right)\,{\rm d}t\\
          &\qquad + \dfrac{\pi a^\nu\left[ {\rm Li}_2(e^{-a}) + a\,{\rm Li}_1(e^{-a})\right]}{2^{\nu+1}\Gamma(\nu+1)},
   \end{align*}
where ${\rm Li}_\alpha(z)$ stands for the {\em dilogarithm function}.
\end{theorem}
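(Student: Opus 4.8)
The plan is to reduce the target integral to the Mathieu–series integral representation \eqref{W4} and then to a Weber–Schafheitlin discontinuous integral. First I would insert $S(x)=\tfrac1x\int_0^\infty t\sin(xt)(e^t-1)^{-1}{\rm d}t$ into the left-hand side and interchange the two integrations. The interchange is legitimate by Tonelli's theorem: using $|\sin(xt)|\le xt$ near the origin one sees that $J_\nu(ax)\sin(xt)\,x^{-\nu-1}=O(a^\nu t)$ as $x\to0$, while $J_\nu(ax)\sin(xt)\,x^{-\nu-1}=O(x^{-\nu-3/2})$ as $x\to\infty$, so the inner integral of the modulus is $O(1+t)$, and multiplying by the integrable factor $t(e^t-1)^{-1}$ leaves a finite repeated integral whenever $\Re(\nu)>0$. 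After interchanging I would put $t=a\tau$ and rescale $y=ax$ in the inner integral; this factors out the powers of $a$ and yields
\[
\int_0^\infty J_\nu(ax)\,S(x)\,\frac{{\rm d}x}{x^\nu}=a^{\nu+2}\int_0^\infty\frac{\tau\,H(\tau)}{e^{a\tau}-1}\,{\rm d}\tau,\qquad H(\tau):=\int_0^\infty\frac{J_\nu(y)\sin(\tau y)}{y^{\nu+1}}\,{\rm d}y.
\]

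Next I would evaluate $H(\tau)$. Writing $\sin(\tau y)=\sqrt{\pi\tau y/2}\,J_{1/2}(\tau y)$ turns $H(\tau)$ into the Weber–Schafheitlin discontinuous integral
\[
H(\tau)=\sqrt{\frac{\pi\tau}{2}}\int_0^\infty\frac{J_\nu(y)\,J_{1/2}(\tau y)}{y^{\nu+1/2}}\,{\rm d}y,
\]
to which the classical formula \cite[\S13.4]{watson} applies with orders $\nu$, $\tfrac12$ and exponent $\nu+\tfrac12$. This is the crux of the argument, because the value of the integral is genuinely different in the two regimes $\tau<1$ and $\tau>1$, and this dichotomy is exactly what produces the cut-off at $1$ in the upper limit of the first integral in the assertion. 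For $\tau<1$ the formula gives $H(\tau)$ as a constant multiple of $\tau\,{}_2F_1\!\left(\tfrac12,\tfrac12-\nu;\tfrac32;\tau^2\right)$, while for $\tau>1$ one of the numerator parameters of the relevant ${}_2F_1$ equals $\frac{\nu-1/2-(\nu+1/2)+1}{2}=0$, so the series collapses to $1$ and $H(\tau)$ reduces to the constant $\pi\big/\!\left(2^{\nu+1}\Gamma(\nu+1)\right)$.

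Finally I would split the $\tau$–integral at $\tau=1$ and assemble the two contributions. The part over $(0,1)$, after pulling out the constants from the Weber–Schafheitlin evaluation, is precisely the hypergeometric integral term. For the part over $(1,\infty)$ the constancy of $H$ leaves $\frac{\pi a^{\nu+2}}{2^{\nu+1}\Gamma(\nu+1)}\int_1^\infty\frac{\tau}{e^{a\tau}-1}\,{\rm d}\tau$; substituting $s=a\tau$ and expanding $(e^s-1)^{-1}=\sum_{k\ge1}e^{-ks}$, termwise integration of $\int_a^\infty s\,e^{-ks}{\rm d}s=(a k^{-1}+k^{-2})e^{-ka}$ produces $a\,{\rm Li}_1(e^{-a})+{\rm Li}_2(e^{-a})$, which is the dilogarithmic term. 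The main obstacle I anticipate is handling the Weber–Schafheitlin integral correctly: justifying its use for the conditionally oscillatory inner integral, tracking the normalizing $\Gamma$–factors so that the constants in both terms come out exactly right, and confirming that the single point $\tau=1$, where the two branches meet and ${}_2F_1(\tfrac12,\tfrac12-\nu;\tfrac32;\cdot)$ is still summable because $\nu+\tfrac12>0$, contributes nothing to the integral.
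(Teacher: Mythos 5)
Your proposal is correct and follows essentially the same route as the paper: inserting the Emersleben--Elbert representation \eqref{W4} of $S(x)$, interchanging the integrals, converting $\sin$ to $J_{1/2}$ so that the Weber--Sonin--Schafheitlin discontinuous integral applies, splitting at the discontinuity, and summing the tail into ${\rm Li}_1$ and ${\rm Li}_2$; your constants and the vanishing numerator parameter in the $\tau>1$ branch all check out. The only difference is that the paper prefaces the computation with a (not strictly needed) identification of the integral as a combination of the series $\mathfrak T_{\nu,\nu+2}^{I,\mathbf L}$ obtained by differentiating the Sonin--Gubler formula, whereas you go directly to the evaluation and supply the Fubini justification the paper leaves implicit.
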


\begin{proof}
Differentiating \eqref{W1} with respect to $n$, we get
   \[ \int_0^\infty \dfrac{2n\, J_\nu(ax)}{(x^2+n^2)^2}\, \dfrac{{\rm d}x}{x^{\nu}} =
             \dfrac{\pi(\nu+1)}{2 n^{\nu+2}} \left( I_\nu(an) - \mathbf L_\nu(an)\right)
           - \dfrac{a\pi}{2n^{\nu+1}}\left( I_\nu'(an) - \mathbf L_\nu'(an)\right)\, .\]
Summing up this relation with respect to positive integers $n \in \mathbb N$, we get
   \begin{align*}
      \kappa_\nu(a) &:= \int_0^\infty J_\nu(ax)\,S(x) \dfrac{{\rm d}x}{ x^\nu} =
             \dfrac{\pi(\nu+1)}2 \sum_{n \ge 1} n^{-\nu-2} \left( I_\nu(an) - \mathbf L_\nu(an)\right) \nonumber \\
           &\,\, - \dfrac{a\pi}2 \sum_{n \ge 1} n^{-\nu-1}\left( I_\nu'(an) - \mathbf L_\nu'(an)\right)
            = \dfrac{\pi(\nu+1)}2\,{\mathfrak T}_{\nu, \nu+2}^{I, \mathbf L}(a)
            - \dfrac{a\pi}2\, \dfrac{\rm d}{{\rm d}a}\,{\mathfrak T}_{\nu, \nu+2}^{I, \mathbf L}(a)\,.
   \end{align*}
Now, by the Emersleben--Elbert formula \eqref{W4} we conclude that
   \[ \kappa_\nu(a) = \int_0^\infty J_\nu(ax)\,S(x) \dfrac{{\rm d}x}{ x^\nu}
                    = \int_0^\infty \dfrac{t}{e^t-1} \left( \int_0^\infty \dfrac{J_\nu(ax)\,\sin(xt)}
                      {x^{\nu+1}}\, {\rm d}x\right)\,{\rm d}t\, . \]
Expressing the sine {\it via} $J_{\frac12}$, we get that the inner--most integral equals
   \begin{equation} \label{W81}
      \int_0^\infty \dfrac{J_\nu(ax)\, \sin(xt)}{x^{\nu+1}}\, {\rm d}x = \sqrt{\dfrac{\pi t}2} \int_0^\infty
             \dfrac{J_\nu(ax)J_{\frac12}(tx)}{x^{\nu+\frac12}}\, {\rm d}x \, .
   \end{equation}
Now, we shall apply the Weber--Sonin--Schafheitlin formula \cite[\S {\bf 13.41}]{watson} for $\lambda = \nu + \frac12$,
which reduces to
    \[ \int_0^\infty J_\nu(ax)J_{\frac12}(tx)x^{-\nu-\frac12}\, {\rm d}x
             = \begin{cases}
      \dfrac{a^\nu \sqrt{\pi}}{2^{\nu+\frac12}\, \sqrt{t}\,\Gamma(\nu+1)}, & \quad \mbox{if}\ \ \ 0<a \le t \\
      \dfrac{a^{\nu-1}\,\sqrt{t}}{2^{\nu+\frac12}\Gamma\left(\nu+\frac12\right)}
            \, {}_2F_1 \left(\left. \begin{array}{c} \frac12, \frac12-\nu\\ \frac32 \end{array} \right| \dfrac{t^2}{a^2}\right),
            & \quad \mbox{if}\ \ \ 0<t<a  \end{cases}\, .\]
Accordingly, \eqref{W81} becomes
   \begin{align*}
      \kappa_\nu(a) &= \dfrac{\sqrt{\pi} a^{\nu-1}}{2^{\nu+1} \Gamma(\nu+\frac12)} \int_0^a \dfrac{t^2}{e^t-1}\,
             {}_2F_1 \left(\left. \begin{array}{c} \frac12, \frac12-\nu\\ \frac32 \end{array} \right| \dfrac{t^2}{a^2}\right)\,{\rm d}t
           + \dfrac{\pi a^\nu}{2^{\nu+1}\Gamma(\nu+1)} \int_a^\infty \dfrac{t}{e^t-1}\, {\rm d}t  \nonumber \\\
          &= \dfrac{\sqrt{\pi} a^{\nu+2}}{2^{\nu+1} \Gamma\left(\nu+\frac12\right)} \int_0^1 \dfrac{t^2}{e^{at}-1}\,
             {}_2F_1 \left(\left. \begin{array}{c} \frac12, \frac12-\nu\\ \frac32 \end{array} \right| t^2 \right)\,{\rm d}t
           + \dfrac{\pi a^\nu}{2^{\nu+1}\Gamma(\nu+1)} \int_0^\infty \dfrac{t+a}{e^{t+a}-1}\, {\rm d}t \nonumber \\
          &= \dfrac{\sqrt{\pi} a^{\nu+2}}{2^{\nu+1} \Gamma\left(\nu+\frac12\right)} \int_0^1 \dfrac{t^2}{e^{at}-1}\,
             {}_2F_1 \left(\left. \begin{array}{c} \frac12, \frac12-\nu\\ \frac32 \end{array} \right| t^2 \right)\,{\rm d}t \nonumber \\
          &\qquad + \dfrac{\pi a^\nu}{2^{\nu+1}\Gamma(\nu+1)}\, \left[ {\rm Li}_2(e^{-a}) + a\,{\rm Li}_1(e^{-a})\right]\, ,
   \end{align*}
where the dilogarithm ${\rm Li}_\alpha(z) = \sum_{n \ge 1}z^n n^{-\alpha},$ $|z|\le 1$, has the integral representation
   \[ {\rm Li}_\alpha(z) = \dfrac{z}{\Gamma(\alpha)}\, \int_0^\infty \dfrac{t^{\alpha-1}}{e^t-z}\, {\rm d}t\,, \qquad \Re(\alpha)>0 .\]
This completes the proof.
\end{proof}

\section{\bf Differential equations for Kapteyn and Schl\"omilch series of modified Bessel and modified Struve functions}

Kapteyn series of Bessel functions were introduced by Willem Kapteyn \cite{WK, WK1}, and were considered and discussed
in details by Nielsen \cite{NN} and Watson \cite{watson}, who devoted a whole section of his celebrated monograph to
this theme. Recently, the present authors and Jankov obtained integral representation and ordinary differential
equations descriptions and related results for real variable Kapteyn series \cite{BJP0, JP1}.

Now, we will consider the Kapteyn series built by modified Bessel functions of the first kind, and modified Struve functions
   \[ \mathfrak K_{\nu, \mu}^{\,\,\alpha}(x)  = \sum_{n \ge 1} \dfrac{\alpha_n}{n^\mu}\left( I_{\nu n}(xn)
                                              - \mathbf L_{\nu n}(xn)\right)\,,\]
where the parameter space includes positive $a>0$, while sequence $(\alpha_n)_{n \ge 1}$ ensures the convergence of
$\mathfrak K_{\nu, \mu}^{\,\,\alpha}(x)$. Our first goal is to establish double definite integral representation formula for
$\mathfrak K_{\nu, \mu}^{\,\,\alpha}(x)$. In this goal we recall the definition of the confluent Fox-Wright generalized
hypergeometric function ${}_1\Psi_1^*$ (for the general case ${}_p\Psi_q^*$ consult \cite[p. 493]{SJPS}):
   \begin{equation} \label{FW}
      {}_1\Psi_1^* \left[\left. \begin{array}{c} (a,\rho)\\ (b,\sigma)\end{array} \right| z \right]=
          \sum_{n=0}^\infty \dfrac{ (a)_{\rho n}}{(b)_{\sigma n}} \; \dfrac{z^n}{n!}\, ,
   \end{equation}
where $a, b \in \mathbb C,$ $\rho, \sigma>0$ and where, as usual, $(\lambda)_{\mu}$ denotes the Pochhammer symbol defined, in terms
of Euler's Gamma function, by
   \[ (\lambda)_{\mu} := \dfrac{\Gamma(\lambda+\mu)}{\Gamma(\lambda)}
                       = \begin{cases}
                            1, & \mbox{if}\ \ \ \mu = 0;\, \lambda \in \mathbb C \setminus \{0\}\\
                            \lambda(\lambda+1) \cdots (\lambda + n-1), & \mbox{if}\ \ \ \mu=n \in \mathbb N;\; \lambda \in \mathbb C
                         \end{cases}.\]
The defining series in \eqref{FW} converges in the whole complex $z$-plane when
$\Delta = \sigma-\rho+1>0$; if $\Delta = 0$, then the series converges for $|z|<\nabla$, where $\nabla := \rho^{-\rho}\,\sigma^\sigma$.

\begin{theorem} Let $\mu>\nu>0$ and let $\alpha \in {\rm C}^2(\mathbb R_+)$, such that $\alpha\left|_{\mathbb N} = (\alpha_n)\right.$.
Then for
   \[ x \in \left( 0, 2\min\left\{1, \dfrac{\nu}{{\rm e}\,
                        \limsup\limits_{n \to \infty}|\alpha_n|^{1/\nu n}}\right\} \right) := \mathscr I_\alpha\, , \]
we have
   \begin{equation} \label{M1}
      \mathfrak K_{\nu, \mu}^{\,\,\alpha}(x) = - \int_1^\infty \int_0^{[t]} \dfrac{\partial}{\partial t}\,
                   \dfrac{\Gamma(\nu t+\frac12)}{\Gamma(\nu t)}\left( \dfrac x2\right)^{\nu t}\,
                   {}_1\Psi_1^\star\left[\left. \begin{array}{c} \left( \frac12, \frac12\right) \\ (\nu t, 1) \end{array} \right|
                   -xt \right]\cdot \mathfrak d_s \dfrac{\alpha(s)s^{\nu s-\mu}}{\Gamma\left(\nu s+\frac12\right)}
                   \,{\rm d}t {\rm d}s\, .
   \end{equation}
\end{theorem}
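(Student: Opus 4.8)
The plan is to transport the building block $I_{\nu t}(xt)-\mathbf{L}_{\nu t}(xt)$ -- now a function of the \emph{continuous} parameter $t$, since both the order $\nu t$ and the argument $xt$ scale with the summation index -- into a closed analytic form, and then to run the very same Abel-summation and Euler--Maclaurin machinery that produced the Neumann-series representation \eqref{V3}. Concretely, I would factor the general term of $\mathfrak K_{\nu,\mu}^{\,\,\alpha}$ as a product $\psi(n)\gamma(n)$, where $\psi$ is a smooth ``function part'' carrying the modified Bessel--Struve combination and $\gamma(s)=\alpha(s)s^{\nu s-\mu}/\Gamma\!\left(\nu s+\tfrac12\right)$ is the ``coefficient part'' on which the counting operator $\mathfrak d_s$ acts. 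The one genuinely new ingredient, absent from the Neumann case \eqref{V3}, is an explicit entire representation of the kernel as a confluent Fox--Wright function of the type \eqref{FW}.

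The key lemma is obtained by merging the two defining power series: the $I$-series supplies the even and the $\mathbf{L}$-series the odd powers, so that
\[
   I_\mu(z)-\mathbf{L}_\mu(z)=\sum_{m\ge 0}\frac{(-1)^m\,(z/2)^{m+\mu}}{\Gamma\!\left(\tfrac m2+1\right)\Gamma\!\left(\tfrac m2+\mu+1\right)}\,.
\]
Applying Legendre's duplication formula $\Gamma(2w)=2^{2w-1}\pi^{-1/2}\Gamma(w)\Gamma\!\left(w+\tfrac12\right)$ to peel the factorial off the denominator recasts this single series as $(z/2)^\mu\,\Gamma(\mu+1)^{-1}\,{}_1\Psi_1^\star$ with numerator pair $\left(\tfrac12,\tfrac12\right)$, denominator pair $\left(\mu+1,\tfrac12\right)$ and argument $-z$; setting $\mu=\nu t$, $z=xt$ identifies precisely the confluent Fox--Wright kernel of \eqref{M1}. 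Substituting this into $\mathfrak K_{\nu,\mu}^{\,\,\alpha}(x)=\sum_{n\ge1}n^{-\mu}\alpha_n\bigl(I_{\nu n}(xn)-\mathbf{L}_{\nu n}(xn)\bigr)$ and collecting the explicit powers $n^{\nu n}$ into the coefficient factor yields the announced factorisation $\psi(n)\gamma(n)$.

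With the factorisation in hand, the representation follows the route of \eqref{V3}. First, summation by parts turns $\sum_{n\ge1}\psi(n)\gamma(n)$ into $-\int_1^\infty \psi'(u)\,\mathscr A(u)\,\mathrm du$, where $\mathscr A(u)=\sum_{n\le u}\gamma(n)$ is the counting sum and the boundary term $\mathscr A(N)\psi(N)$ is shown to vanish as $N\to\infty$. Next, the Euler--Maclaurin counting-sum identity underlying \eqref{W6}, namely $\mathscr A(u)=\int_0^{[u]}\mathfrak d_s\gamma(s)\,\mathrm ds$, is inserted, and Fubini's theorem rearranges everything into the double integral \eqref{M1}, with $\partial/\partial t$ acting on $\psi$ and $\mathfrak d_s$ on $\gamma$.

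The analytic heart of the argument -- and the step I expect to be the main obstacle -- is the justification of the limiting operations: the vanishing of the Abel boundary term, the legitimacy of differentiating the Fox--Wright kernel $\psi$ termwise in $t$, and the interchange of summation and integration. All three hinge on a uniform large-$n$ estimate of the kernel $I_{\nu n}(xn)-\mathbf{L}_{\nu n}(xn)$, whose leading power-series term is $(xn/2)^{\nu n}/\Gamma(\nu n+1)\asymp(xe/2\nu)^{\nu n}$ by Stirling's formula; a root test then forces $x<2\nu/\bigl(e\,\limsup_n|\alpha_n|^{1/\nu n}\bigr)$, and intersecting with the elementary constraint $x<2$ that keeps the remaining factors controlled gives exactly the admissible interval $\mathscr I_\alpha$. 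The hypotheses $\mu>\nu>0$ and $\alpha\in C^2(\mathbb R_+)$ then supply, respectively, the $n^{-\mu}$ decay and the regularity needed for $\mathfrak d_s\gamma$, for the termwise $t$-differentiation, and for the Fubini interchange; once absolute and locally uniform convergence on $\mathscr I_\alpha$ is secured, these steps become routine and the proof is complete.
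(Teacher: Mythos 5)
Your proposal is correct in outline but follows a genuinely different route from the paper's. The paper never touches the power series of $I_{\nu n}-\mathbf L_{\nu n}$ directly: it first applies the Sonin--Gubler formula \eqref{W1} to replace each term $I_{\nu n}(xn)-\mathbf L_{\nu n}(xn)$ by a $J_{\nu n}$--integral, then inserts Gegenbauer's integral representation for $J_{\nu n}$ and evaluates $\int_0^\infty\cos(xty)(y^2+n^2)^{-1}{\rm d}y=\tfrac{\pi}{2n}e^{-xtn}$, which turns the inner sum into a genuine Dirichlet series $\sum_n a_n e^{-np(t)}$ with $p(t)=xt+\nu\ln\tfrac2{x(1-t^2)}$; Cahen's formula \eqref{W6} is applied to that, and the Fox--Wright kernel only reappears at the very end when the resulting $t$--integral $\int_0^1(1-t^2)^{\nu z-1/2}e^{-xzt}\,{\rm d}t$ is expanded termwise. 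Your route short-circuits all of this: the merged series $I_\mu(z)-\mathbf L_\mu(z)=\sum_{m\ge0}(-1)^m(z/2)^{m+\mu}/[\Gamma(\tfrac m2+1)\Gamma(\tfrac m2+\mu+1)]$ is correct (even $m$ reproduce $I_\mu$, odd $m$ give $-\mathbf L_\mu$), the duplication formula does convert it into a ${}_1\Psi_1^\star$, and the Abel-summation-plus-Euler--Maclaurin step is exactly the mechanism behind \eqref{W6} that the paper also invokes, so your argument closes. Two remarks on what each approach buys. First, the paper's detour supplies, essentially for free, the beta-integral form $I_\mu(z)-\mathbf L_\mu(z)=\tfrac{2(z/2)^\mu}{\sqrt{\pi}\,\Gamma(\mu+\frac12)}\int_0^1(1-t^2)^{\mu-1/2}e^{-zt}\,{\rm d}t$ and hence the clean bound $0<I_\mu(z)-\mathbf L_\mu(z)\le(z/2)^\mu/\Gamma(\mu+1)$; you need exactly this (not merely the ``leading power-series term'' heuristic, since the crude termwise bound $I_\mu(z)+\mathbf L_\mu(z)$ carries an extra $e^{z^2/(4\mu)}$ that would spoil the root test) to justify the vanishing of the Abel boundary term and the stated interval $\mathscr I_\alpha$ --- but it follows at once from your own lemma by summing the ${}_1\Psi_1^\star$ back into that beta integral, so this is a sharpening, not a gap. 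Second, your route gives a useful cross-check on the kernel: your (correct) computation produces the denominator pair $\left(\nu t+1,\tfrac12\right)$ with prefactor $\Gamma\left(\nu t+\tfrac12\right)/\Gamma(\nu t+1)$, and the paper's own termwise evaluation of $\int_0^1 t^j(1-t^2)^{\nu z-1/2}{\rm d}t$ yields the same, so the pair $(\nu t,1)$ and the factor $\Gamma\left(\nu t+\tfrac12\right)/\Gamma(\nu t)$ printed in \eqref{M1} appear to be a misprint that your derivation would correct.
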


\begin{proof} The Sonin--Gubler formula enables us to transform the summands of the Kapteyn series
$\mathfrak K_{\nu, \mu}^{\,\,\alpha}(x)$
into
   \[ \mathfrak K_{\nu, \mu+1}^{\,\,\alpha}(x) = \frac2\pi \int_0^\infty \sum_{n \ge 1} \dfrac{\alpha_n}{n^{\mu-\nu n}}\,
                   \dfrac{J_{\nu n}(x y)}{(y^2 + n^2)\,y^{\nu n}}\, {\rm d}y \, .\]
Making use of the Gegenbauer's integral expression for $J_\alpha$ \cite[p. 204, Eq. (4.7.5)]{AAR}, after some algebra we get
   \begin{align*}
      \mathfrak K_{\nu, \mu+1}^{\,\,\alpha}(x) &= \dfrac1{\sqrt{\pi}} \int_0^1\dfrac1{\sqrt{1-t^2}}\,
                  \left\{ \sum_{n \ge 1} \dfrac{\alpha_n\, \left( \frac x2(1-t^2)\right)^{\nu n}}
                  {n^{\mu-\nu n}\, \Gamma\left( \nu n + \frac12\right)} \int_0^\infty \dfrac{\cos(xty)}{y^2+n^2}\,{\rm d}y\right\}\, {\rm d}t\\
               &= \dfrac2{\sqrt{\pi}} \int_0^1\dfrac1{\sqrt{1-t^2}}\,
                  \left\{ \sum_{n \ge 1} \dfrac{\alpha_n\, \left(\frac x2(1-t^2)\right)^{\nu n}{\rm e}^{-xtn}}
                  {n^{\mu-\nu n+1}\, \Gamma\left( \nu n + \frac12\right)}\right\}\, {\rm d}t\\
               &= \dfrac2{\sqrt{\pi}} \int_0^1\dfrac1{\sqrt{1-t^2}}\,\mathscr D_\alpha(t)\, {\rm d}t\, ,
   \end{align*}
where the inner sum is evidently the following Dirichlet series
   \[ \mathscr D_\alpha(t) = \sum_{n \ge 1} \dfrac{\alpha_n\, \exp\left\{-n\left(xt + \nu \ln \frac2{x(1-t^2)}\right)\right\}}
                  {n^{\mu-\nu n+1}\, \Gamma\left( \nu n + \frac12\right)}\, , \]
and $p(t) = xt + \nu \ln \frac2{x(1-t^2)}>0$ for $x\in (0,2),$ since $p$ is increasing on $(0,1)$. By the Cauchy convergence
test applied to $\mathscr D_\alpha(t)$ we deduce that
   \[ \left( \frac{{\rm e}x}{2\nu}\,(1-t^2)\right)^\nu {\rm e}^{-xt}\, \limsup_{n \to \infty} |\alpha_n|^{1/n}
      \le \left( \frac{{\rm e}x}{2\nu}\right)^\nu\, \limsup_{n \to \infty} |\alpha_n|^{1/n} <1\, , \]
that is, for all $x \in \mathscr I_\alpha$ the series converges absolutely and uniformly. By the Cahen formula \eqref{W6} we have
   \[ \mathscr D_\alpha(t) = \ln {\rm e}^{xt}\left(\dfrac2{x(1-t^2)}\right)^\nu \int_0^\infty \int_0^{[z]}
                 \left( \left( \frac x2(1-t^2)\right)^\nu{\rm e}^{-xt}\right)^z \cdot
                 \mathfrak d_s \dfrac{\alpha(s) s^{\nu s-\mu-1}}{\Gamma\left(\nu s+\frac12\right)}\, {\rm d}z\, {\rm d}s\, .\]
Thus
   \begin{align*}
      \mathfrak K_{\nu, \mu+1}^{\,\,\alpha}(x) &= - \dfrac1{\sqrt{\pi}}\,\int_0^\infty\int_0^{[z]}
                   \mathfrak d_s \dfrac{\alpha(s) s^{\nu s-\mu-1}}{\Gamma\left(\nu s+\frac12\right)}\, \Phi_\nu(z)\,
                   {\rm d}z\, {\rm d}s\, ,
   \end{align*}
where the $t$--integral
   \[ \Phi_\nu(z) = \int_0^1 \dfrac{\ln {\rm e}^{-xt}\left( \frac x2 (1-t^2)\right)^\nu}{\sqrt{1-t^2}}
                    \left( \left( \dfrac x2 (1-t^2)\right)^\nu\, {\rm e}^{-xt}\right)^z\,{\rm d}t \]
has to be evaluated. After indefinite integration, under definite integral, expanding the exponential term into Maclaurin series,
legitimate termwise integration leads to
   \begin{align*}
      \int \Phi_\nu(z)\, {\rm d}z &= \left( \dfrac x2\right)^{\nu z} \int_0^1 (1-t^2)^{\nu z-\frac12}{\rm e}^{-xzt}\, {\rm d}t \\
             &= \frac{\sqrt{\pi}\, \Gamma\left(\nu z+\frac12\right)}{2\,\Gamma(\nu z)}\,\left( \dfrac x2\right)^{\nu z}\,
                \sum_{j=0}^\infty \dfrac{\left(\frac12\right)_{\frac12 j}}{(\nu z)_j}\, \dfrac{(-xz)^j}{j!}\\
             &= \frac{\sqrt{\pi}\, \Gamma\left(\nu z+\frac12\right)}{2\,\Gamma(\nu z)}\,\left( \dfrac x2\right)^{\nu z}\,
                {}_1\Psi_1^\star\left[\left. \begin{array}{c} \left( \frac12, \frac12\right) \\ (\nu z, 1) \end{array} \right|- xz \right]\, .
   \end{align*}
Consequently
   \[ \Phi_\nu(z) = \frac{\sqrt{\pi}}2\, \dfrac{\partial}{\partial z}\,\frac{\Gamma\left(\nu z+\frac12\right)}{\Gamma(\nu z)}\,
                    \left( \dfrac x2\right)^{\nu z}\,
                    {}_1\Psi_1^\star\left[\left. \begin{array}{c} \left( \frac12, \frac12\right) \\
                    (\nu z, 1) \end{array} \right|- xz \right]\,, \]
and thus
   \begin{align*}
       \mathfrak K_{\nu, \mu+1}^{\,\,\alpha}(x) &= - \int_1^\infty \int_0^{[t]} \dfrac{\partial}{\partial t}\,
                   \dfrac{\Gamma(\nu t+\frac12)}{\Gamma(\nu t)}\left( \dfrac x2\right)^{\nu t}\\
            &\qquad \times {}_1\Psi_1^\star\left[\left. \begin{array}{c} \left( \frac12, \frac12\right) \\ (\nu t, 1) \end{array} \right|
                   -xt \right]\cdot \mathfrak d_s \dfrac{\alpha(s)s^{\nu s-\mu-1}}{\Gamma\left(\nu s+\frac12\right)}
                   \,{\rm d}t {\rm d}s\, .
   \end{align*}
The proof is complete.
\end{proof}

Now, our goal is to establish a second order nonhomogeneous ordinary differential equation which particular
solution is the above introduced special kind Kapteyn series \eqref{K1}. Firstly, we introduce the modified
Bessel type differential operator
   \[ M[y] \equiv y'' + \dfrac1x\, y' - \left(1+\dfrac{\nu^2}{x^2}\right)\, y\, ;\]
this operator is associated with the modified Struve differential equation, reads as follows
   \begin{equation} \label{MSE}
      M[y] \equiv  y'' + \dfrac1x\, y' - \left(1+\dfrac{\nu^2}{x^2}\right)\, y =
                     \dfrac{\left(\frac x2\right)^{\nu-1}}{\sqrt{\pi}\,\Gamma(\nu + \frac12)}\, .
   \end{equation}

\begin{theorem} Let $\min(\nu, \mu)>0$. Then for $x \in \mathscr I_\alpha$ the Kapteyn series $\mathfrak K =
\mathfrak K_{\nu, \mu}^{\,\,\alpha}(x)$ is a particular solution of the nonhomogeneous linear second order
ordinary differential equation
   \begin{equation} \label{K2}
      M_\mu^\alpha [\mathfrak K] \equiv \mathfrak K'' + \dfrac1x\, \mathfrak K' - \left(1+\frac{\nu^2}{x^2}\right)\mathfrak K =
           \dfrac1x\,\Xi_{\nu, \mu}^\alpha(x) + \dfrac2{x\sqrt{\pi}}\, \sum_{n \ge 1} \dfrac{\alpha_n(\frac x2)^{\nu n}}
           {\Gamma\left(\nu n+ \frac12\right)n^{\mu-\nu n+1}}\, ,
   \end{equation}
where
   \begin{align*}
      \Xi_{\nu, \mu}^\alpha(x) &= \frac{\rm d}{{\rm d}x}\, \int_1^\infty \int_1^{[t]} \dfrac{\partial}{\partial t}\,
                   \dfrac{\Gamma\left(\nu t+\frac12\right)}{\Gamma(\nu t)}\left(\dfrac x2\right)^{\nu t}\\
           &\qquad \times {}_1\Psi_1^\star\left[\left. \begin{array}{c} \left( \frac12, \frac12\right) \\
                   (\nu t, 1) \end{array} \right| -xt \right]\cdot \mathfrak d_s \dfrac{\alpha(s)s^{\nu s-\mu-1}(s-1)}
                   {\Gamma\left(\nu s+\frac12\right)} \,{\rm d}t {\rm d}s\, .
   \end{align*}
\end{theorem}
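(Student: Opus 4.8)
The plan is to apply the modified Bessel operator $M$ to $\mathfrak K=\mathfrak K_{\nu,\mu}^{\,\,\alpha}$ termwise and then to reassemble the outcome into the right--hand side of \eqref{K2}. Write $\varphi_n(x):=I_{\nu n}(xn)-\mathbf L_{\nu n}(xn)$, so that $\mathfrak K=\sum_{n\ge1}\alpha_n n^{-\mu}\varphi_n$. First I would justify that the second--order operator may be taken inside the sum: on each compact subinterval of $\mathscr I_\alpha$ the series $\mathfrak K$ and its formal first and second $x$--derivatives converge uniformly, which is a consequence of the same Cauchy--test estimates used in the proof of \eqref{M1} (the admissible range $\mathscr I_\alpha$ already encodes the required control on $|\alpha_n|^{1/\nu n}$, and the Sonin--Gubler asymptotics \eqref{W0} dominate the $\mathbf L_{\nu n}$ contribution). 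Consequently $M_\mu^\alpha[\mathfrak K]=\sum_{n\ge1}\alpha_n n^{-\mu}M[\varphi_n]$.

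The decisive step is the single--index evaluation of $M[\varphi_n]$. Put $\psi_n(u):=I_{\nu n}(u)-\mathbf L_{\nu n}(u)$, so $\varphi_n(x)=\psi_n(nx)$. Since $\mathbf L_{\nu n}$ solves the modified Struve equation \eqref{MSE} of order $\nu n$ while $I_{\nu n}$ solves the homogeneous modified Bessel equation, the difference satisfies $\psi_n''(u)+u^{-1}\psi_n'(u)-(1+(\nu n)^2u^{-2})\psi_n(u)=-(u/2)^{\nu n-1}/(\sqrt\pi\,\Gamma(\nu n+\tfrac12))$. Substituting $u=nx$, using $\varphi_n'=n\psi_n'(nx)$, $\varphi_n''=n^2\psi_n''(nx)$ and the scaling identity $(\nu n)^2(nx)^{-2}=\nu^2x^{-2}$, the first--derivative contributions cancel and I obtain the clean per--term identity
\[ M[\varphi_n](x)=(n^2-1)\bigl(1+\nu^2x^{-2}\bigr)\varphi_n(x)-\frac{n^2\,(nx/2)^{\nu n-1}}{\sqrt\pi\,\Gamma(\nu n+\tfrac12)}\,. \]
Multiplying by $\alpha_n n^{-\mu}$, summing, and rewriting $(nx/2)^{\nu n-1}=\tfrac2x\,n^{\nu n-1}(x/2)^{\nu n}$, the inhomogeneous pieces assemble into an explicit power series of the type appearing on the right of \eqref{K2}, while the homogeneous pieces form the weighted sum $\bigl(1+\nu^2x^{-2}\bigr)\sum_{n\ge1}\alpha_n(n^2-1)n^{-\mu}\varphi_n(x)$.

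It then remains to identify this last weighted Kapteyn sum with $x^{-1}\Xi_{\nu,\mu}^\alpha(x)$. Here I would feed the reweighted sequence, carrying the vanishing factor $n^2-1$, into the double integral representation \eqref{M1}. The structural point is that the $n=1$ term is killed by $n^2-1$; this is exactly what permits the inner Euler--Maclaurin integral in $\Xi_{\nu,\mu}^\alpha$ to begin at $1$ rather than $0$ and what forces the $(s-1)$ weight in $\mathfrak d_s\bigl(\alpha(s)s^{\nu s-\mu-1}(s-1)/\Gamma(\nu s+\tfrac12)\bigr)$, since on $(0,1)$ one has $\int_0^1\mathfrak d_s g=g(1)=0$ for such a weight. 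Applying the outer derivative $\tfrac{\rm d}{{\rm d}x}$ then reproduces the Fox--Wright kernel $\tfrac{\Gamma(\nu t+\frac12)}{\Gamma(\nu t)}(x/2)^{\nu t}\,{}_1\Psi_1^\star[\,\cdot\,|-xt]$ under the $t$--integral. I expect the main obstacle to be precisely this final bookkeeping: reconciling the algebraic prefactor $x^{-1}(1+\nu^2x^{-2})$ and the outer $x$--differentiation against the $t$--integrated kernel, and accounting for any boundary contribution at $s=1$ produced by the Euler--Maclaurin conversion, so that those boundary terms merge correctly with the explicit power series in \eqref{K2}. The termwise double differentiation, though routine in view of \eqref{M1}, must also be argued with care because $M$ is of order two.
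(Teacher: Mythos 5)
Your strategy is the same as the paper's in spirit (apply $M$ termwise, using the order--$\nu n$ modified Struve equation after the substitution $x\mapsto nx$), and your per--term identity is correct: with $\varphi_n(x)=I_{\nu n}(nx)-\mathbf L_{\nu n}(nx)$ the chain rule and the order--$\nu n$ equations give
\begin{equation*}
   M[\varphi_n](x)=(n^2-1)\Bigl(1+\frac{\nu^2}{x^2}\Bigr)\varphi_n(x)
   \mp\frac{n^2\left(\frac{nx}2\right)^{\nu n-1}}{\sqrt{\pi}\,\Gamma\left(\nu n+\frac12\right)}
\end{equation*}
(one can confirm the homogeneous part directly: $M[I_{\nu n}(nx)]=(n^2-1)(1+\nu^2x^{-2})I_{\nu n}(nx)$ from the modified Bessel equation). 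The genuine gap is the step you defer as ``final bookkeeping'': it is not bookkeeping, and it cannot be carried out. Summing your identity against $\alpha_nn^{-\mu}$ produces the forcing series $\frac2{x\sqrt{\pi}}\sum_{n\ge1}\alpha_n\left(\frac x2\right)^{\nu n}n^{-(\mu-\nu n-1)}/\Gamma\left(\nu n+\frac12\right)$, which differs from the series in \eqref{K2} by a factor $n^2$ inside the sum, and produces the residual $(1+\nu^2x^{-2})\sum_{n\ge2}\alpha_n(n^2-1)n^{-\mu}\varphi_n(x)$, a zeroth--order multiple of the series with weight $(n^2-1)n^{-\mu}$, whereas $x^{-1}\Xi_{\nu,\mu}^\alpha(x)$ is $x^{-1}\frac{\rm d}{{\rm d}x}$ applied to the (integral representation of the) series with weight $(n-1)n^{-\mu-1}$. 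Since the asserted equation is supposed to hold for every admissible $\alpha$, matching would force $(n+1)\bigl(1+\nu^2x^{-2}\bigr)\varphi_n(x)=(nx)^{-1}\varphi_n'(x)$ for each $n$, which is false. So your computation, pushed to completion, yields a right--hand side different from \eqref{K2}.

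The source of the mismatch is that the paper's own proof sums the scaled equations while identifying $\sum_n\alpha_nn^{-\mu}\,y_n''(nx)$ with $\mathfrak K''$ and $\sum_n\alpha_nn^{-\mu-1}\,y_n'(nx)$ with $\mathfrak K_{\nu,\mu+1}'$, i.e.\ without the chain--rule factors $n^2$ and $n$ that you correctly carry; that is exactly how the first--derivative residual $\frac1x\bigl(\mathfrak K_{\nu,\mu}^{\,\,\alpha}-\mathfrak K_{\nu,\mu+1}^{\,\,\alpha}\bigr)'$ and the exponent $\mu-\nu n+1$ arise. Consequently you cannot both keep your (correct) formula for $M[\varphi_n]$ and arrive at \eqref{K2}: either you reproduce the paper's chain--rule--free summation, which contradicts your per--term identity, or you prove the corrected identity displayed above in place of \eqref{K2}. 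Your surrounding remarks --- justification of termwise double differentiation on compacta of $\mathscr I_\alpha$, the vanishing of the $n=1$ term permitting the inner integral to start at $1$, and $\int_0^1\mathfrak d_s\,g=g(1)$ --- are all sound, but they do not touch this obstruction. (A secondary point: with the standard sign convention for the modified Struve equation the forcing term for $I_\nu-\mathbf L_\nu$ carries a minus sign, as you wrote, while the paper keeps a plus; this is a separate, minor discrepancy.)
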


\begin{proof} Consider the modified Struve differential equation \eqref{MSE}
   \[ M[y] \equiv y''(x) + \dfrac1x\,y'(x) - \left(1+ \dfrac{\nu^2}{x^2}\right)\,y(x) =
           \dfrac{\left(\frac x2\right)^{\nu-1}}{\sqrt{\pi}\,\Gamma(\nu + \frac12)}\]
which possesses the solution $y(x) = c_1I_\nu(x) + c_2\mathbf L_\nu(x) + c_3K_\nu(x),$ where $K_\nu$ stands for
the modified Bessel function of the second kind of order $\nu$. Being $I_\nu$ and $K_\nu$ independent particular
solutions (the Wronskian $W[I_\nu,K_\nu] = -x^{-1}$) of the homogeneous modified Bessel ordinary differential
equation, which appears on the left side in \eqref{MSE}, the choice $c_3=0$ is legitimate. Thus $y(x) =
I_{\nu n}(x) - \mathbf L_{\nu n}(x)$ is also a particular solution of \eqref{MSE}. Setting $\nu \mapsto \nu n$, we get
   \begin{align*}
      (I_{\nu n}(x) - \mathbf L_{\nu n}(x))''  &+ \dfrac1x\,(I_{\nu n}(x) - \mathbf L_{\nu n}(x))'\\
                   &- \left(1+ \dfrac{\nu^2n^2}{x^2}\right)\,(I_{\nu n}(x) - \mathbf L_{\nu n}(x)) =
                      \dfrac{\left(\frac x2\right)^{\nu n -1}}{\sqrt{\pi}\,\Gamma\left(\nu n + \frac12\right)}\,.
   \end{align*}
Finally, putting $x \mapsto xn$ multiplying the above display with $n^{-\mu}\alpha_n$ and summing up in
$n \in \mathbb N$, we obtain
   \[ M\!\left[ \mathfrak K_{\nu, \mu}^{\,\,\alpha}\right] = M_\mu^\alpha[\mathfrak K] =
      \dfrac1x\, \left( \mathfrak K_{\nu, \mu}^{\,\,\alpha}(x) - \mathfrak K_{\nu, \mu+1}^{\,\,\alpha}(x)\right)' +
      \dfrac2{x\sqrt{\pi}}\, \sum_{n \ge 1} \dfrac{\alpha_n(\frac{xn}2)^{\nu n}}{\Gamma\left(\nu n+ \frac12\right)n^{\mu+1}}\, , \]
where all three right--hand side series converge uniformly inside  $\mathscr I_\alpha$. Applying the result
\eqref{M1} of the previous theorem to the series
   \[ \mathfrak K_{\nu, \mu}^{\,\,\alpha}(x) - \mathfrak K_{\nu, \mu+1}^{\,\,\alpha}(x) =
                   \sum_{n \ge 2} \dfrac{\alpha_n (n-1)}{n^{\mu+1}}\,
                   \left( I_{\nu n}(xn) - \mathbf L_{\nu n}(xn)\right)\, ,\]
the summation begins with 2. So, the current lower integration limit in the Euler-Maclaurin summation formula related to
\eqref{M1} becomes 1. By this we clearify the stated relation \eqref{K2}.
\end{proof}

In the following we concentrate on the summation of Schl\"omilch series
   \begin{align*}
      \mathfrak T_{\nu, \mu}^{I, \mathbf L}(x) &:= \sum_{n \ge 1} \dfrac1{n^\mu}\,\left(I_\nu(nx) - {\mathbf L}_\nu(nx)\right)\\
      \widetilde{\mathfrak T}_{\nu, \mu}^{I, \mathbf L}(x) &:= \sum_{n \ge 1} \dfrac{(-1)^{n-1}}{n^\mu}\left(I_\nu(nx)
                     - {\mathbf L}_\nu(nx)\right)\, .
   \end{align*}
To unify these procedures, we consider the generalized Schl\"omilch series like \eqref{00}
   \begin{equation} \label{SX2}
      \mathfrak S_{\nu, \mu}^{I, \mathbf L}(x) = \sum_{n \ge 1} \dfrac{\alpha_n}{n^\mu}\,\left(I_\nu(xn)-\mathbf L_\nu(xn)\right)\,;
   \end{equation}
obviously $\mathfrak T_{\nu, \mu}^{I, \mathbf L}(x), \widetilde{\mathfrak T}_{\nu, \mu}^{I, \mathbf L}(x)$ are special cases of
$\mathfrak S_{\nu, \mu}^{I, \mathbf L}(x)$. However, bearing in mind the asymptotics in Sonin--Gubler formula \eqref{W0}, we see that
the necessary condition for the convergence of $\mathfrak S_{\nu, \mu}^{I, \mathbf L}(x)$ for a fixed $x>0$ becomes $\alpha_n = o \left( n^{\mu-\nu+1}\right)$ as $n \to \infty.$

\begin{theorem} Let $\min(\nu,\mu,x)>0$ and $\alpha \in {\rm C}^1(\mathbb R_+)$ be monotone increasing, such that
$\alpha\left|_{\mathbb N}\right. = (\alpha_n)_{n \ge 1}$, and that $\sum_{n \ge 1}n^{-\mu+\nu-1}\alpha_n$ converges.
Then $\mathfrak S = \mathfrak S_{\nu, \mu}^{I, \mathbf L}(x)$ is a particular solution of the nonhomogeneous
linear second order ordinary differential equation
   \[ M_\mu^\alpha[\mathfrak S] = M\left[ \mathfrak S_{\nu, \mu}^{I, \mathbf L}\right] =
                      \dfrac1x\, \left(\Upsilon_{\mu+1}^{\,\,\alpha, 1}(x)\right)'
                    - \dfrac{\nu^2}{x^2}\,\Upsilon_{\mu+2}^{\,\,\alpha, 2}(x) + \dfrac{(\frac x2)^{\nu-1}}
                      {\sqrt{\pi}\,\Gamma\left(\nu + \frac12\right)}\, \sum_{n \ge 1} \dfrac{\alpha_n}{n^{\mu-\nu+1}}\,,\]
where
   \[ \Upsilon_{\,\,\mu}^{\alpha, \beta}(x) = \mu \int_0^\infty e^{-\mu t}\, \int_1^{[e^t]}
                          \mathfrak d_u\left(\alpha(u)(u^\beta-1)\, (I_\nu(xu)
                        - \mathbf L_\nu(xu)) \right)\,{\rm d}t\,{\rm d}u\, .\]
\end{theorem}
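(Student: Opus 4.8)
The plan is to mirror the proof of the preceding theorem, the only structural novelty being that here the order $\nu$ is held fixed while the argument is dilated by $n$. First I would record, exactly as in that proof, that $y(x)=I_\nu(x)-\mathbf{L}_\nu(x)$ is the particular solution of the modified Struve equation \eqref{MSE} obtained by taking $c_3=0$; this is legitimate because $I_\nu$ and $K_\nu$ already span the homogeneous solution space (with Wronskian $W[I_\nu,K_\nu]=-x^{-1}$), so $K_\nu$ may be discarded. Replacing $x$ by $xn$ in \eqref{MSE} then furnishes, for every $n\in\mathbb{N}$ and all $x>0$, the identity
\[ y''(xn)+\frac1{xn}\,y'(xn)-\Bigl(1+\frac{\nu^2}{x^2n^2}\Bigr)y(xn)=\frac{(xn/2)^{\nu-1}}{\sqrt{\pi}\,\Gamma\left(\nu+\frac12\right)}. \]

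Next I would multiply this dilated equation by $\alpha_n n^{-\mu}$ and sum over $n\ge1$, regrouping the resulting series so as to exhibit the operator $M$ acting on $\mathfrak{S}=\mathfrak{S}_{\nu,\mu}^{I,\mathbf{L}}$ defined in \eqref{SX2}. The essential difference from the Kapteyn case is that there the substitution $\nu\mapsto\nu n$ combined with $x\mapsto xn$ collapsed $\nu^2n^2/(xn)^2$ to $\nu^2/x^2$ and produced a clean operator, whereas here, with $\nu$ fixed, the dilated coefficients $1/(xn)$ and $\nu^2/(x^2n^2)$ differ from $1/x$ and $\nu^2/x^2$ by powers of $n$. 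Collecting the discrepancy beyond $M[\mathfrak{S}]$, the surplus splits into the two Dirichlet series
\[ \sum_{n\ge1}\frac{\alpha_n(n-1)}{n^{\mu+1}}\bigl(I_\nu(xn)-\mathbf{L}_\nu(xn)\bigr),\qquad \sum_{n\ge1}\frac{\alpha_n(n^2-1)}{n^{\mu+2}}\bigl(I_\nu(xn)-\mathbf{L}_\nu(xn)\bigr), \]
the first entering through $\tfrac1x\tfrac{\rm d}{{\rm d}x}$ after a termwise $x$-differentiation and the second through the factor $-\nu^2/x^2$, which matches the two middle terms of the asserted right-hand side. The telescoping factors $n-1$ and $n^2-1$ both vanish at $n=1$, which is precisely what later reduces the inner Euler--Maclaurin integration to the range $[1,[e^t]]$ rather than $[0,[e^t]]$.

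To convert these two correction series into the stated double integrals I would invoke Cahen's formula \eqref{W6} with $\lambda_n=\ln n$: writing the coefficient sequence as $a_n=\alpha_n(n^\beta-1)\bigl(I_\nu(xn)-\mathbf{L}_\nu(xn)\bigr)$ and noting $a_1=0$, the counting sum becomes $\sum_{n\,:\,\ln n\le t}a_n=\int_1^{[e^t]}\mathfrak{d}_u a(u)\,{\rm d}u$, so that Cahen's representation delivers exactly $\Upsilon_{\mu+1}^{\alpha,1}(x)$ for $\beta=1$ and $\Upsilon_{\mu+2}^{\alpha,2}(x)$ for $\beta=2$. The nonhomogeneous term is the summed right-hand side of the dilated Struve equation, namely $\dfrac{(x/2)^{\nu-1}}{\sqrt{\pi}\,\Gamma(\nu+\frac12)}\sum_{n\ge1}\alpha_n n^{-(\mu-\nu+1)}$, which is finite precisely under the standing hypothesis that $\sum_{n\ge1}n^{-\mu+\nu-1}\alpha_n$ converges.

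The step I expect to be the main obstacle is analytic rather than algebraic: legitimising the termwise differentiation and the interchange of summation with the Cahen integral, and, closely tied to it, the careful matching of the powers of $n$ produced by the chain rule $\tfrac{\rm d}{{\rm d}x}(I_\nu(xn)-\mathbf{L}_\nu(xn))=n(I_\nu'(xn)-\mathbf{L}_\nu'(xn))$ against those carried by the dilated coefficients. For the convergence I would lean on the Sonin--Gubler asymptotics \eqref{W0}, $I_\nu(xn)-\mathbf{L}_\nu(xn)=\mathscr{O}\left(n^{\nu-1}\right)$, together with the analogous estimates for the first two $x$-derivatives, to show that $\mathfrak{S}$ and its two formal derivatives converge uniformly on compact subsets of $(0,\infty)$; the monotonicity of $\alpha$ and the convergence of $\sum_{n\ge1}n^{-\mu+\nu-1}\alpha_n$ are what dominate both the differentiated series and the two Dirichlet series, and they also secure the hypotheses on $(\lambda_n)$ needed to apply Cahen's formula. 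Once this uniform convergence is in place, each manipulation above is justified termwise and the three pieces assemble into the claimed identity.
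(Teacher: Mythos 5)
Your proposal follows essentially the same route as the paper's proof: the same choice of particular solution $y=I_\nu-\mathbf{L}_\nu$ of the modified Struve equation, the same dilation $x\mapsto xn$ followed by multiplication by $\alpha_n n^{-\mu}$ and summation, the same regrouping into the two correction series with factors $n-1$ and $n^2-1$ (vanishing at $n=1$, whence the lower limit $1$ in the inner integral), and the same appeal to Cahen's formula with $\lambda_n=\ln n$ to produce $\Upsilon_{\mu+1}^{\alpha,1}$ and $\Upsilon_{\mu+2}^{\alpha,2}$. The only difference is that you spell out the convergence justifications via the Sonin--Gubler asymptotics, which the paper leaves implicit.
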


\begin{proof} Consider again the modified Struve differential equation \eqref{MSE}, which possesses the solution
$y(x) = c_1I_\nu(x) + c_2\mathbf L_\nu(x) + c_3K_\nu(x)$, and choose the particular solution associated with
$c_1 = -c_2=1$ and $c_3 = 0$. Transforming \eqref{MSE} putting $x \mapsto xn$, multiplying it by $\alpha_nn^{-\mu}$
and summing up the equation with respect $n \in \mathbb N$, we arrive at
   \begin{align*}
      \left( \sum_{n \ge 1} \dfrac{\alpha_n}{n^\mu} y(xn)\right)'' &+ \dfrac1x\,\left(\sum_{n \ge 1}
             \dfrac{\alpha_n}{n^{\mu+1}} y(xn)\right)' - \sum_{n \ge 1} \dfrac{\alpha_n}{n^\mu} y(xn)\\
          &- \dfrac{\nu^2}{x^2}\,\sum_{n \ge 1} \dfrac{\alpha_n}{n^{\mu+2}} y(xn)
           = \dfrac{\left(\dfrac x2\right)^{\nu-1}}{\sqrt{\pi}\,\Gamma\left(\nu+\frac12\right)}
             \sum_{n \ge 1} \dfrac{\alpha_n}{n^{\mu-\nu+1}}\, .
   \end{align*}
Thus
   \begin{align*}
      M\left[ \mathfrak S_{\nu, \mu}^{I, \mathbf L}\right] &=
              \dfrac1x\,\left( \sum_{n \ge 2}\dfrac{\alpha_n(n-1)}{n^{\mu+1}}\, y(xn) \right)' \\
            &\qquad - \dfrac{\nu^2}{x^2}\, \sum_{n \ge 2}\dfrac{\alpha_n(n^2-1)}{n^{\mu+2}}\, y(xn)
            + \dfrac{\left(\dfrac x2\right)^{\nu-1}}{\sqrt{\pi}\,\Gamma\left(\nu+\frac12\right)}
             \sum_{n \ge 1} \dfrac{\alpha_n}{n^{\mu-\nu+1}}\, .
   \end{align*}
Denote
   \begin{equation} \label{UPS}
      \Upsilon_{\,\,\mu}^{\alpha, \beta}(x) = \sum_{n \ge 2}\dfrac{\alpha_n(n^\beta-1)}{n^\mu}\,y(xn),
                    \qquad 0<\nu \le \mu, x>0\, .
   \end{equation}
Following the same lines of the proof of Theorem 7, by Cahen's formula and the Euler--Maclaurin summation we immediately yield the
double definite integral representation
   \[ \Upsilon_{\,\,\mu}^{\alpha, \beta}(x) = \mu \int_0^\infty e^{-\mu t}\, \int_1^{[e^t]}
                               \mathfrak d_u\left(\alpha(u)(u^\beta -1)\,y(xu)\right)\,{\rm d}t\,{\rm d}u\, ;\]
which immediatelly lead to the stated result.
\end{proof}

\begin{corollary} Let $\mu-1>\nu>0$ and $x>0$. Then $\mathfrak T = \mathfrak T_{\nu, \mu}^{I, \mathbf L}(x)$ is a particular solution
of the nonhomogeneous linear second order ordinary differential equation
   \begin{equation} \label{L3}
      M[\mathfrak T] = \dfrac1x\,\left(\Upsilon_{\mu+1}^{\,\,1,1}(x)\right)' - \dfrac{\nu^2}{x^2}\,\Upsilon_{\mu+2}^{\,\,1,2}(x)
                    + \dfrac{\zeta(\mu-\nu+1)}{\sqrt{\pi}\,\Gamma\left(\nu + \frac12\right)}\,\left(\frac x2\right)^{\nu-1} ,
   \end{equation}
where
   \[ \Upsilon_{\,\,\mu}^{1, \beta}(x) = \mu \int_0^\infty e^{-\mu t}\, \int_1^{[e^t]}
                          \mathfrak d_u\left((u^\beta-1)\, (I_\nu(xu)
                        - \mathbf L_\nu(xu)) \right)\,{\rm d}t\,{\rm d}u\, .\]

\end{corollary}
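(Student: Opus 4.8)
The plan is to obtain the corollary as a direct specialization of Theorem 11, setting $\alpha_n \equiv 1$ (equivalently $\alpha \equiv 1 \in {\rm C}^1(\mathbb R_+)$, which is trivially monotone increasing in the weak sense the theorem allows). First I would check that the hypotheses of Theorem 11 are met under the corollary's assumption $\mu-1>\nu>0$: the required convergence of $\sum_{n\ge 1} n^{-\mu+\nu-1}\alpha_n$ becomes $\sum_{n\ge 1} n^{-(\mu-\nu+1)} = \zeta(\mu-\nu+1)$, which converges precisely when $\mu-\nu+1>1$, i.e. $\mu>\nu$. The strict inequality $\mu-1>\nu$ is the stronger condition actually needed to guarantee convergence of the series defining $\mathfrak T$ itself (recall from \eqref{W0} that the summands are $\mathscr O(n^{\nu-1})$, so $\mathfrak T_{\nu,\mu}^{I,\mathbf L}$ converges like $\zeta(\mu-\nu+1)$, and one also needs the differentiated series $\Upsilon_{\mu+2}^{1,2}$ to converge, which explains the extra margin).

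Next I would substitute $\alpha_n=1$ into the three right-hand-side terms of Theorem 11. The operator identity $M_\mu^\alpha[\mathfrak S] = M[\mathfrak S_{\nu,\mu}^{I,\mathbf L}]$ collapses to $M[\mathfrak T]$, matching the left side of \eqref{L3}. The two integral terms $\dfrac1x(\Upsilon_{\mu+1}^{\alpha,1}(x))'$ and $-\dfrac{\nu^2}{x^2}\Upsilon_{\mu+2}^{\alpha,2}(x)$ become $\dfrac1x(\Upsilon_{\mu+1}^{1,1}(x))'$ and $-\dfrac{\nu^2}{x^2}\Upsilon_{\mu+2}^{1,2}(x)$ respectively, where the notation $\Upsilon_{\mu}^{1,\beta}$ is simply $\Upsilon_{\mu}^{\alpha,\beta}$ with $\alpha(u)\equiv 1$; this is exactly the integral displayed in the corollary statement, with $\mathfrak d_u$ now acting on $(u^\beta-1)(I_\nu(xu)-\mathbf L_\nu(xu))$. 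Finally, the inhomogeneous term $\dfrac{(x/2)^{\nu-1}}{\sqrt{\pi}\,\Gamma(\nu+\frac12)}\sum_{n\ge1}\alpha_n n^{-\mu+\nu-1}$ becomes $\dfrac{(x/2)^{\nu-1}}{\sqrt{\pi}\,\Gamma(\nu+\frac12)}\,\zeta(\mu-\nu+1)$, since $\sum_{n\ge1}n^{-(\mu-\nu+1)}=\zeta(\mu-\nu+1)$; this reproduces the last term of \eqref{L3}.

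There is essentially no obstacle here, as the corollary is a bookkeeping specialization of a theorem proved just above. The only point requiring a word of care is the discrepancy in the constant parameter ranges: Theorem 11 is stated for $\min(\nu,\mu,x)>0$ with an abstract convergence hypothesis on the series, whereas the corollary sharpens this to the explicit $\mu-1>\nu>0$. I would therefore make explicit that $\mu-1>\nu$ is what secures the absolute convergence of all three series involved (the series for $\mathfrak T$, and the two differentiated series generating $\Upsilon_{\mu+1}^{1,1}$ and $\Upsilon_{\mu+2}^{1,2}$), so that the formal summation and term-by-term application of the modified Struve differential operator $M$ in Theorem 11 are legitimate in this setting. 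With that convergence remark in place, the identity \eqref{L3} follows at once by reading off Theorem 11 with $\alpha\equiv 1$, and the proof reduces to the single sentence \emph{``Setting $\alpha_n\equiv 1$ in Theorem 11 and evaluating the Dirichlet sum as $\zeta(\mu-\nu+1)$ yields \eqref{L3}.''}
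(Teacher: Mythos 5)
Your proposal is correct and is exactly the paper's (implicit) route: the corollary is stated without separate proof precisely because it is the specialization $\alpha_n\equiv 1$ of Theorem 11, with the Dirichlet sum $\sum_{n\ge 1}n^{-(\mu-\nu+1)}=\zeta(\mu-\nu+1)$. Your convergence remark is the right thing to flag, though the term forcing the sharper hypothesis $\mu-1>\nu$ is the differentiated term $\frac1x\bigl(\Upsilon_{\mu+1}^{\,\,1,1}\bigr)'$ (the $x$--derivative costs roughly a factor $n$ in the summands, whose order is $\mathscr O(n^{\nu-1})$ by \eqref{W0}), rather than $\Upsilon_{\mu+2}^{\,\,1,2}$, which already converges for $\mu>\nu$.
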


\begin{corollary} Let $\mu>\nu>0$ and $ x>0$. Then $\widetilde{\mathfrak T} = \widetilde{\mathfrak T}_{\nu, \mu}^{I, \mathbf L}(x)$ is
a particular solution of the nonhomogeneous linear second order ordinary differential equation
   \[ M_\mu^\alpha\left[ \widetilde{\mathfrak T}\right] = M\left[ \widetilde{\mathfrak T}_{\nu, \mu}^{I, \mathbf L}\right] =
                      \dfrac1x\,\left( \widetilde{\Upsilon}_{\mu+1}^{\,\,\,1}(x)\right)'
                    - \dfrac{\nu^2}{x^2}\, \widetilde{\Upsilon}_{\mu+2}^{\,\,\,2}(x)
                    + \dfrac{\eta(\mu-\nu+1)}{\sqrt{\pi}\,\Gamma\left(\nu + \frac12\right)}\,
                      \left(\frac x2\right)^{\nu-1} ,\]
where
   \[ \widetilde{\Upsilon}_\mu^{\beta}(x) = \mu \int_0^\infty e^{-\mu t}\, \int_1^{[e^t]}
                                \mathfrak d_u\left({\rm e}^{{\rm i}\pi u}\,(u^\beta-1) (\mathbf L_\nu(xu)
                              - I_\nu(xu)) \right)\,{\rm d}t\,{\rm d}u\, .\]
\end{corollary}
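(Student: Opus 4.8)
The plan is to run, \emph{mutatis mutandis}, the proof of Theorem~11 with the specialization $\alpha_n=(-1)^{n-1}$, the only genuinely new ingredient being that the alternating factor must be carried through the Cahen--Euler--Maclaurin machinery by writing $(-1)^{n-1}=e^{\mathrm i\pi(n-1)}$, exactly as was done for the alternating Mathieu-type sum in the proof of Theorem~7. Thus I would again start from the modified Struve differential equation \eqref{MSE} and select the particular solution $y(x)=I_\nu(x)-\mathbf L_\nu(x)$ corresponding to $c_1=-c_2=1$, $c_3=0$.

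Next I would substitute $x\mapsto xn$ in \eqref{MSE}, multiply by $(-1)^{n-1}n^{-\mu}$, and sum over $n\ge1$. The discrepancy between $M[y(xn)]$ and the rescaled operator produces, just as in Theorem~11, the weights $(n-1)$ in the first-order term and $(n^2-1)$ in the zeroth-order term, so that the left-hand side collapses to $M[\widetilde{\mathfrak T}_{\nu,\mu}^{I,\mathbf L}]$ while the right-hand side becomes $\tfrac1x(\widetilde\Upsilon_{\mu+1}^{\,1}(x))'-\tfrac{\nu^2}{x^2}\widetilde\Upsilon_{\mu+2}^{\,2}(x)$ plus the source term $\tfrac{(x/2)^{\nu-1}}{\sqrt\pi\,\Gamma(\nu+\frac12)}\sum_{n\ge1}(-1)^{n-1}n^{-(\mu-\nu+1)}$, in which the last sum is precisely the Dirichlet eta function $\eta(\mu-\nu+1)$. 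It then remains to recast $\widetilde\Upsilon_\mu^{\,\beta}(x)=\sum_{n\ge2}(-1)^{n-1}(n^\beta-1)n^{-\mu}y(xn)$, the alternating analogue of \eqref{UPS}, as a Dirichlet series with exponents $\lambda_n=\ln n$ and coefficients $e^{\mathrm i\pi(n-1)}(n^\beta-1)y(xn)$; applying Cahen's formula \eqref{W6} together with the Euler--Maclaurin counting-sum step yields the claimed double integral, provided one checks that the complex interpolant $e^{\mathrm i\pi u}(\mathbf L_\nu(xu)-I_\nu(xu))(u^\beta-1)$ reduces at $u=n$ to $(-1)^{n-1}(I_\nu(xn)-\mathbf L_\nu(xn))(n^\beta-1)$.

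The main obstacle I anticipate is conceptual rather than computational: Theorem~11 is stated for a \emph{monotone increasing} interpolant $\alpha$, a hypothesis that $\alpha_n=(-1)^{n-1}$ plainly violates, so Corollary~2 is not a literal specialization and the Cahen step has to be re-justified with the phase $e^{\mathrm i\pi(n-1)}$ absorbed into the summand rather than into a monotone $\lambda$. The care needed here is precisely the sign/phase bookkeeping flagged above, and it is underwritten by the precedent in Theorem~7, where the same device handled $\widetilde{\mathfrak T}_{\nu,\nu}^{I,\mathbf L}$. Finally, the convergence range $\mu>\nu>0$ would follow from the asymptotics \eqref{W0}, $I_\nu(xn)-\mathbf L_\nu(xn)=\mathscr O(n^{\nu-1})$: the alternating sign makes the two $\widetilde\Upsilon$ series and the eta-series converge by the Dirichlet (Leibniz) test throughout this wider interval, consistent with the equiconvergence $\widetilde{\mathfrak T}_{\nu,\mu}^{I,\mathbf L}(x)\sim\eta(\mu-\nu+1)$ recorded after \eqref{W0}.
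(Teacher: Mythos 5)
Your proposal is correct and follows essentially the same route as the paper, which presents this corollary as an immediate consequence of the proof of Theorem~11 specialized to $\alpha_n=(-1)^{n-1}$, with the alternating factor absorbed as $e^{{\rm i}\pi u}$ in the Cahen--Euler--Maclaurin step exactly as in Theorem~7 (turning $\zeta$ into $\eta$ and $\Upsilon$ into $\widetilde\Upsilon$). Your observation that the monotonicity hypothesis on $\alpha$ in Theorem~11 is violated, so that the phase must be carried inside the interpolated summand rather than treating the corollary as a literal specialization, is a fair point that the paper passes over in silence.
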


Now, a completely different type integral representation formula will be derived for $\mathfrak T_{\nu, \nu+1}^{I, \mathbf L}(x)$
which simplifies the nonhomogeneous part of related differential equation \eqref{L3}.

\begin{theorem} If $\nu>0$ and $x>0,$ then we have
   \[ \mathfrak T_{\nu, \nu+1}^{I, \mathbf L}(x) = \int_0^\infty J_\nu(xt)\,\left(\coth(\pi t)
                                                 - \dfrac1{\pi t}\right)\, \dfrac{{\rm d}t}{t^{\nu+1}}\, .\]
\end{theorem}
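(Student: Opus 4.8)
The plan is to collapse the defining series into a single integral by applying the Sonin--Gubler formula \eqref{W1} to every summand and then exchanging summation and integration. Writing \eqref{W1} with $x$ in the place of the free parameter and with $t$ as the integration variable, each term of $\mathfrak T_{\nu,\nu+1}^{I,\mathbf L}(x)=\sum_{n\ge1}n^{-(\nu+1)}\bigl(I_\nu(xn)-\mathbf L_\nu(xn)\bigr)$ becomes
\[
   \dfrac1{n^{\nu+1}}\bigl(I_\nu(xn)-\mathbf L_\nu(xn)\bigr)=\dfrac2\pi\int_0^\infty\dfrac{J_\nu(xt)}{t^\nu\,(t^2+n^2)}\,{\rm d}t\,,
\]
so that, upon summing over $n$ and interchanging the order of the operations,
\[
   \mathfrak T_{\nu,\nu+1}^{I,\mathbf L}(x)=\dfrac2\pi\int_0^\infty\dfrac{J_\nu(xt)}{t^\nu}\left(\sum_{n\ge1}\dfrac1{t^2+n^2}\right){\rm d}t\,.
\]

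Next I would evaluate the inner series through the Mittag--Leffler (partial--fraction) expansion of the hyperbolic cotangent, $\pi\coth(\pi t)=t^{-1}+\sum_{n\ge1}2t\,(t^2+n^2)^{-1}$, which gives $\sum_{n\ge1}(t^2+n^2)^{-1}=\tfrac{\pi}{2t}\bigl(\coth(\pi t)-\tfrac1{\pi t}\bigr)$. Substituting this closed form and cancelling the prefactor $\tfrac2\pi\cdot\tfrac{\pi}{2t}=t^{-1}$ against the $t^\nu$ in the denominator yields at once
\[
   \mathfrak T_{\nu,\nu+1}^{I,\mathbf L}(x)=\int_0^\infty J_\nu(xt)\left(\coth(\pi t)-\dfrac1{\pi t}\right)\dfrac{{\rm d}t}{t^{\nu+1}}\,,
\]
which is exactly the asserted representation.

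The hard part is not the algebra but the justification of the term-by-term integration, and this is where I expect the real work to lie. The natural tool is Tonelli's theorem applied to the nonnegative function $|J_\nu(xt)|\,t^{-\nu}(t^2+n^2)^{-1}$ on $\mathbb N\times(0,\infty)$, which reduces the double integral to $\int_0^\infty|J_\nu(xt)|\,t^{-\nu}\sum_{n\ge1}(t^2+n^2)^{-1}\,{\rm d}t$. Near the origin $J_\nu(xt)/t^\nu$ stays bounded, since $J_\nu(xt)\sim(xt/2)^\nu/\Gamma(\nu+1)$, while the resummed kernel $\tfrac{\pi}{2t}\bigl(\coth(\pi t)-\tfrac1{\pi t}\bigr)$ tends to the finite value $\zeta(2)=\pi^2/6$; at infinity $|J_\nu(xt)|\,t^{-\nu}$ decays like $t^{-\nu-1/2}$ and the kernel behaves like $\pi/(2t)$, so the full integrand is $\mathscr O(t^{-\nu-3/2})$. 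Hence the iterated integral of the absolute value is finite for every $\nu>0$, Fubini applies, and the interchange is legitimate. The convergence of $\mathfrak T_{\nu,\nu+1}^{I,\mathbf L}(x)$ itself is already secured by the Sonin--Gubler asymptotics \eqref{W0}, since here the exponent $\mu=\nu+1$ exceeds $\nu$; the same estimates show that the resulting line integral converges absolutely, which completes the argument.
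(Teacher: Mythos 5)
Your proof is correct and follows exactly the route of the paper: apply the Sonin--Gubler formula \eqref{W1} to each summand, interchange summation and integration, and evaluate $\sum_{n\ge 1}(t^2+n^2)^{-1}$ by the partial-fraction expansion of $\coth$. The only difference is that you supply the Tonelli/Fubini justification for the interchange, which the paper omits entirely; your asymptotic estimates at $0$ and $\infty$ are accurate and make the argument complete.
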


\begin{proof}
Consider the well--known summation formula \cite{hamb}
   \[ \sum_{n \ge 1} \dfrac1{a^2+n^2} = \dfrac\pi{2a}\,\coth(\pi a) - \dfrac1{2a^2}, \qquad a \neq {\rm i}n\, .\]
In conjunction with the Sonin--Gubler formula \eqref{W6} we conclude that
   \[ \mathfrak T_{\nu, \nu+1}^{I, \mathbf L}(x) = \dfrac2\pi\, \int_0^\infty J_\nu(xt) \left(\sum_{n \ge 1} \dfrac1{t^2+n^2}\right)
                   \dfrac{{\rm d}t}{t^\nu} = \int_0^\infty J_\nu(xt)\,\left(\dfrac1t\, \coth(\pi t) - \dfrac1{\pi t^2}\right)\,
                   \dfrac{{\rm d}t}{t^\nu}\, .\]
\end{proof}

\begin{remark} {\rm Actually, the formula
   \[ \sum_{n \ge 1} \dfrac1{a^2+n^2} = \dfrac\pi{2a}\,\coth(\pi a) - \dfrac1{2a^2}, \qquad a \neq 0 \]
has been considered by Hamburger \cite[p. 130, (C)]{hamb} in the slightly different form
   \[ 1+2\sum_{n=1}^\infty {\rm e}^{-2\pi na} = {\rm i} \cot \pi{\rm i}a =
      \dfrac1{\pi a}+ \dfrac{2a}\pi\,\sum_{n=1}^\infty \dfrac1{a^2+n^2}, \qquad a \neq {\rm i}n\, ;\tag{C} \]
Hamburger proved that the functional equation for the Riemann--zeta function is equivalent to (C), see also \cite{butzer1} for connections the above formulae to Eisenstein series. Also, it is worth to mention that further, complex analytical
generalizations of above formula can be found in \cite{berndt}.}
\end{remark}

\section{\bf Novel Bromwich--Wagner line integral representation for $J_\nu(x)$}

As a by--product of Theorem 12, we arrive at the integral relation
  \[ \mathfrak T_{\nu, \nu+1}^{I, \mathbf L}(x) = \sum_{n \ge 1} \dfrac1{n^{\nu+1}}\left(I_\nu(xn)
                     - \mathbf L_\nu(xn)\right), \qquad \nu>0, x>0\]
which, in the expanded form reads
  \[ \int_0^\infty J_\nu(xt)\,\left( \coth(\pi t) - \dfrac1{\pi t}\right)\, \dfrac{{\rm d}t}{t^{\nu+1}} =
        (\nu+1) \int_0^\infty \int_0^{[e^s]} e^{-(\nu+1)s}\,\mathfrak
d_u\left(I_\nu(xu)- \mathbf L_\nu(xu) \right)\,{\rm d}s\,{\rm d}u\,
.\]
This arises in a Fredholm type convolutional integral equation of the
first kind with degenerate kernel
  \begin{equation} \label{Y1}
      \int_0^\infty f(xt)\,\left( \coth(\pi t) - \dfrac1{\pi t}\right)\,
\dfrac{{\rm d}t}{t^{\nu+1}} = F_\nu(x)\, ,
  \end{equation}
having nonhomogeneous part
  \begin{equation} \label{Y2}
      F_\nu(x) = (\nu+1) \int_0^\infty \int_0^{[e^s]}
e^{-(\nu+1)s}\,\mathfrak d_u\left(I_\nu(xu)- \mathbf L_\nu(xu) \right)\,{\rm d}s\,{\rm d}u\, .
  \end{equation}
Obviously, $J_\nu$ is a particular solution of this equation. Before we
state our result, we say that the functions
$f$ and $g$ are {\em orthogonal a.e.} with respect to the ordinary Lebesgue
measure on the positive half-line when
$\int_0^\infty f(x)g(x){\rm d}x$ vanishes, writing this as $f\perp g$.

\begin{theorem}
Let $\nu>0$ and $x>0$. The first kind Fredholm type convolutional integral
equation with
degenerate kernel \eqref{Y1} possesses particular solution $f = J_{\nu} +
h$, where $h \in L^1(\mathbb R_+)$ and
    \[ h(x) \perp x^{-\nu-1}\,\left( \coth(\pi x) - \dfrac1{\pi x}\right), \qquad x>0 \]
if and only if  the nonhomogeneous part of the integral equation equals $F_\nu(x)$
given by \eqref{Y2}.
\end{theorem}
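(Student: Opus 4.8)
The plan is to exploit the linearity of the first-kind operator
\[ \mathscr{L}[f](x) := \int_0^\infty f(xt)\left(\coth(\pi t) - \frac{1}{\pi t}\right)\frac{{\rm d}t}{t^{\nu+1}}, \]
together with the identity $\mathscr{L}[J_\nu] = F_\nu$, which is precisely the expanded relation recorded just before the statement, i.e.\ Theorem 12 combined with Cahen's formula \eqref{W6}. Writing the candidate solution as $f = J_\nu + h$, linearity yields $\mathscr{L}[f] = F_\nu + \mathscr{L}[h]$, so the entire assertion collapses to controlling the perturbation $\mathscr{L}[h]$: the nonhomogeneous part equals $F_\nu$ exactly when $\mathscr{L}[h]$ vanishes.

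For sufficiency I would argue as follows. Assuming the nonhomogeneous part is $F_\nu$, one must produce a solution of the advertised shape. The choice $h\equiv 0$ lies in $L^1(\mathbb R_+)$ and is trivially orthogonal to the weight $x^{-\nu-1}(\coth(\pi x)-1/(\pi x))$, and $f=J_\nu$ already solves \eqref{Y1}; more generally any $h$ in the null space of $\mathscr{L}$ may be adjoined, and every such $h$ automatically satisfies the orthogonality hypothesis because, after the substitution $s=xt$,
\[ \mathscr{L}[h](1) = \int_0^\infty h(t)\left(\coth(\pi t)-\frac{1}{\pi t}\right)\frac{{\rm d}t}{t^{\nu+1}} = \int_0^\infty h(x)\,x^{-\nu-1}\left(\coth(\pi x)-\frac{1}{\pi x}\right){\rm d}x, \]
which is exactly the inner product appearing in the statement. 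Hence solutions of the form $J_\nu+h$ exist.

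For necessity I would begin from a solution $f=J_\nu+h$ with $h\in L^1(\mathbb R_+)$ and $h\perp x^{-\nu-1}(\coth(\pi x)-1/(\pi x))$, and then show $\mathscr{L}[h]\equiv 0$, so that the nonhomogeneous part is forced to be $F_\nu$. The kernel $k(x,u)=x^{-1}K(u/x)$, with $K(t)=t^{-\nu-1}(\coth(\pi t)-1/(\pi t))$, is homogeneous of degree $-1$, so $\mathscr{L}$ is dilation-covariant and $\mathscr{L}[h](x)=\int_0^\infty h(xt)K(t)\,{\rm d}t$. Passing to the Mellin transform turns this multiplicative convolution into the product $\widetilde{\mathscr{L}[h]}(s)=\widetilde h(s)\,\widetilde K(1-s)$, where $\widetilde K$ is evaluated through the Hamburger expansion $\coth(\pi t)-1/(\pi t)=(2t/\pi)\sum_{n\ge1}(t^2+n^2)^{-1}$ already used in Theorem 12, tying $\widetilde K$ to the Riemann zeta function. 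Read in Mellin--Parseval form, the orthogonality hypothesis annihilates the contour integral of $\widetilde h(s)\widetilde K(1-s)$, and combined with the dilation covariance one aims to conclude $\mathscr{L}[h]\equiv 0$, whence $F=F_\nu$.

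The main obstacle is exactly this last upgrade. The stated orthogonality is a single scalar relation, equivalent to $\mathscr{L}[h](1)=0$, whereas the equality $F=F_\nu$ of functions demands the stronger $\mathscr{L}[h](x)=0$ for every $x>0$. This reflects the intrinsic ill-posedness of first-kind Fredholm equations, whose null space here is governed by the zeros of $\widetilde K(1-s)$; the substance of the proof is to show that, within the admissible class $h\in L^1(\mathbb R_+)$ and for the given convolution kernel, the orthogonality hypothesis is strong enough to place $h$ in that null space. I would approach this by carefully justifying the Fubini interchange and the Mellin--Parseval identity on a common fundamental strip where $\widetilde h$ and $\widetilde K(1-\cdot)$ are analytic, and then by leaning on the degree $-1$ homogeneity of the kernel, which makes $\mathscr{L}$ dilation-covariant and so links the scalar orthogonality of $h$ to the vanishing of $\mathscr{L}[h]$ at every scale. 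Controlling the integrability of $h(xt)K(t)$ uniformly in $x$, so that these transform manipulations are legitimate, is the step I expect to demand the most care.
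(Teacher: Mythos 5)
The paper offers no proof of this theorem for you to be measured against: it is stated bare, followed only by the remark that an $h$ of the required kind ``has been constructed in [DP, Example].'' So your proposal must stand on its own, and it does not quite. The direction you call sufficiency is fine ($h\equiv 0$ works, and any $h$ annihilated by the operator may be adjoined). The necessity direction is where the content lies, and the obstacle you flag at the end is not a technicality to be handled ``with care'' --- it is fatal to the route you describe. After the substitution $u=xt$ one has
\[
\mathscr{L}[h](x)=\frac1x\int_0^\infty h(u)\,K\!\left(\frac ux\right){\rm d}u,\qquad K(t)=t^{-\nu-1}\Bigl(\coth(\pi t)-\frac1{\pi t}\Bigr),
\]
so forcing $F=F_\nu$ requires $h$ to be orthogonal to \emph{every dilate} $K(\cdot/x)$, $x>0$, whereas the stated hypothesis (with the paper's definition of $\perp$ as the vanishing of a single integral) is only orthogonality to $K$ itself, i.e. $\mathscr{L}[h](1)=0$. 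Dilation covariance does not propagate one vanishing moment to all scales: it gives $\mathscr{L}[h](\lambda)=\mathscr{L}[h(\lambda\,\cdot)](1)$, which trades the evaluation point for a \emph{different} function, so the scalar relation for $h$ says nothing about $\lambda\neq1$. Concretely, any compactly supported $h\in L^1(\mathbb R_+)$ with $\int_0^\infty h\,K\,{\rm d}u=0$ but $\int_0^\infty h(u)K(u/2)\,{\rm d}u\neq 0$ (such $h$ exist, since $K$ and $K(\cdot/2)$ are linearly independent) satisfies all your hypotheses while $\mathscr{L}[h]\not\equiv 0$; no Fubini or Mellin--Parseval bookkeeping can overcome this, because on the Mellin side you would be asking the vanishing of one contour integral of $\widetilde h(s)\widetilde K(1-s)$ to imply the vanishing of the product identically.

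What rescues the argument is to read the orthogonality hypothesis as holding for the whole dilation family --- equivalently, to take the hypothesis to be $\mathscr{L}[h]\equiv 0$ outright --- after which the theorem is immediate from linearity together with the identity $\mathscr{L}[J_\nu]=F_\nu$ supplied by Theorem 12 and Cahen's formula, which is the only genuine input. (One can even say more: the Mellin transform of the kernel on the strip $c\in(\nu,\nu+1)$ is $\pi^{-1}{\rm B}\bigl(\tfrac{p-\nu}2,\tfrac{\nu-p}2+1\bigr)\zeta(\nu-p+2)$, computed in the proof of Theorem 14, and where this is zero-free the dilates of $K$ are total in the sense of Wiener's Tauberian theorem for the multiplicative group, so orthogonality to all of them forces $h=0$ a.e.) Your write-up is commendably honest that the key step is missing, but as it stands the proof is incomplete, and the specific upgrade you hope to extract from a single orthogonality relation is false.
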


We mention that $h$ as in the above theorem has been constructed in \cite[Example]{DP}. To solve the
integral equation \eqref{Y2} we use the Mellin integral
transform technique, following some lines of
a similar procedure used by Dra\v s\v ci\'c--Pog\'any in \cite{DP}. The
Mellin transform pair of certain
suitable $f$ we define \cite{span}
  \[ \mathscr{M}_p(f):= \int_0^\infty x^{p-1} f(x)\,{\rm d}x, \qquad
\mathscr{M}^{-1}_x(g) :=
      \frac1{2\pi{\rm i}}\int_{c-{\rm i}\infty}^{c+{\rm i}\infty} x^{-p}
\mathscr{M}_p(f)\,{\rm d}p\, , \]
where the inverse Mellin transform is given in the form of a line integral
with Bromwich--Wagner type integration path
which begins at $c- {\rm i}\infty$ and terminates at $c+{\rm i}\infty$.
Here the real $c$ belongs to the fundamental
strip of the inverse Mellin transform $\mathscr M^{-1}$.

\begin{theorem} Let $\nu>0, x>0$. Then the following Bromwich--Wagner type
line integral representation holds true
  \begin{equation} \label{Y6}
      J_\nu(x) = \dfrac{\nu+1}{2{\rm i}} \int_{c - {\rm i}\infty}^{c + {\rm i}\infty}
                  \dfrac{\mathscr M_p\left(\displaystyle \int_0^\infty \int_0^{[e^s]}e^{-(\nu+1)s}\,\mathfrak d_u\left(I_\nu(xu)
                - \mathbf L_\nu(xu) \right)\,{\rm d}s\,{\rm d}u\right)}
                  {\Gamma\left(\frac{p-\nu}2\right)\, \Gamma\left(\frac{\nu-p}2+1\right)\,
                  \zeta(\nu-p+2)}\, x^{p-1}\, {\rm d}p\, ,
  \end{equation}
where $c \in (\nu, \nu+1)$.
\end{theorem}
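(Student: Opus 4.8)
The plan is to treat \eqref{Y1} as a Mellin-convolution equation in the variable $x$ and to invert it by the Mellin transform technique announced before the statement. Write $K(t) = t^{-\nu-1}\bigl(\coth(\pi t) - \tfrac1{\pi t}\bigr)$ for the degenerate kernel, so that the left-hand side of \eqref{Y1} is $\int_0^\infty f(xt)K(t)\,\dt$. A direct change of variable $y=xt$ inside $\mathscr M_p\bigl(\int_0^\infty f(x\cdot)K\bigr)$ produces the correlation identity $\mathscr M_p\bigl(\int_0^\infty f(xt)K(t)\,\dt\bigr)=\mathscr M_p(f)\,\mathscr M_{1-p}(K)$. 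Applying this to \eqref{Y1} with $f=J_\nu$ (which solves the equation by Theorem 12 and the discussion preceding \eqref{Y1}) turns the integral equation into the purely algebraic relation $\mathscr M_p(J_\nu)\,\mathscr M_{1-p}(K)=\mathscr M_p(F_\nu)$, whence $\mathscr M_p(J_\nu)=\mathscr M_p(F_\nu)/\mathscr M_{1-p}(K)$.

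The computational heart is the evaluation of the kernel transform. Here I would reuse the partial-fraction expansion already exploited in Theorem 12, namely $\coth(\pi t)-\tfrac1{\pi t}=\tfrac{2t}\pi\sum_{n\ge1}(t^2+n^2)^{-1}$, so that $K(t)=\tfrac2\pi\,t^{-\nu}\sum_{n\ge1}(t^2+n^2)^{-1}$. Interchanging summation and integration (legitimate by absolute convergence on the appropriate strip) and inserting the elementary Mellin integral $\int_0^\infty \frac{t^{a-1}}{t^2+n^2}\,\dt=\frac{\pi\,n^{a-2}}{2\sin(\pi a/2)}$, valid for $0<\Re(a)<2$, collapses the sum into a Riemann zeta value and gives $\mathscr M_s(K)=\zeta(\nu-s+2)\big/\sin\!\bigl(\tfrac\pi2(s-\nu)\bigr)$ on the strip $\nu<\Re(s)<\nu+1$. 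The reflection formula $\Gamma(z)\Gamma(1-z)=\pi/\sin(\pi z)$ with $z=\tfrac12(s-\nu)$ rewrites the sine factor as $\Gamma\!\bigl(\tfrac{s-\nu}2\bigr)\Gamma\!\bigl(\tfrac{\nu-s}2+1\bigr)$, which is exactly the $\Gamma$--$\Gamma$--$\zeta$ denominator appearing in \eqref{Y6}; this simultaneously pins down the fundamental strip $c\in(\nu,\nu+1)$ and the shape of the integrand.

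Inverting the Mellin transform along a Bromwich--Wagner path $\Re(p)=c$ inside the common fundamental strip, and substituting the closed form of $\mathscr M(K)$ together with the definition \eqref{Y2} of $F_\nu$ (whose prefactor $\nu+1$ is pulled out as the constant $\tfrac{\nu+1}{2{\rm i}}$, and the remaining $\tfrac1\pi$ combining with $\sin$ via the reflection formula), I would arrive at \eqref{Y6}. Because \eqref{Y1} has a degenerate kernel, its solution is determined only up to a function $h\in L^1(\mathbb R_+)$ orthogonal to $x^{-\nu-1}(\coth(\pi x)-\tfrac1{\pi x})$ — precisely the ambiguity recorded in Theorem 13 — so one checks that the Mellin reconstruction singles out the genuine particular solution $J_\nu$, the orthogonal component contributing nothing to $F_\nu$ and hence nothing to $\mathscr M_p(F_\nu)$. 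The step I expect to be the main obstacle is the analytic bookkeeping around the inversion: one must verify that $\mathscr M_p(J_\nu)$, $\mathscr M_{1-p}(K)$ and $\mathscr M_p(F_\nu)$ share a nonempty fundamental strip, that the interchange of sum and integral defining $\mathscr M(K)$ is justified there, and that the index shift generated by the correlation identity (the $1-p$ versus $p$ in the arguments) is tracked consistently through the change of variable $p\mapsto 1-p$ that moves the contour to $\Re(p)=c\in(\nu,\nu+1)$ and the power to $x^{p-1}$.
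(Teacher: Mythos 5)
Your strategy coincides with the paper's: Mellin--transform the Fredholm equation \eqref{Y1}, factor the left--hand side by the Mellin correlation identity, evaluate the kernel transform through the partial--fraction expansion of $\coth$ together with the elementary integral $\int_0^\infty t^{a-1}(t^2+n^2)^{-1}{\rm d}t=\tfrac{\pi}{2}\,n^{a-2}/\sin\bigl(\tfrac{\pi a}{2}\bigr)$, rewrite the sine by the reflection formula, and invert along a Bromwich--Wagner line. Your computation $\mathscr M_s(K)=\zeta(\nu-s+2)/\sin\bigl(\tfrac{\pi}{2}(s-\nu)\bigr)$ on $\nu<\Re(s)<\nu+1$ is exactly the paper's Beta--function step, and it is correct.

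The gap is that you never carry out the ``analytic bookkeeping'' you yourself flag as the main obstacle, and that is precisely the step on which the stated formula hinges. Your correlation identity $\mathscr M_p\bigl(\int_0^\infty f(xt)K(t)\,{\rm d}t\bigr)=\mathscr M_p(f)\,\mathscr M_{1-p}(K)$ is the correct orientation (substituting $y=xt$ in the inner integral produces $t^{-p}\mathscr M_p(f)$ and leaves $\int_0^\infty t^{-p}K(t)\,{\rm d}t=\mathscr M_{1-p}(K)$). Solving it gives $\mathscr M_p(J_\nu)=\mathscr M_p(F_\nu)/\mathscr M_{1-p}(K)$ on the strip $-\nu<\Re(p)<1-\nu$ where $\mathscr M_{1-p}(K)$ lives, and Mellin inversion followed by the substitution $p\mapsto 1-p$ yields
\[
J_\nu(x)=\frac{1}{2\pi{\rm i}}\int_{c-{\rm i}\infty}^{c+{\rm i}\infty}\frac{\mathscr M_{1-p}(F_\nu)}{\mathscr M_{p}(K)}\,x^{p-1}\,{\rm d}p,\qquad c\in(\nu,\nu+1),
\]
that is, \eqref{Y6} with $\mathscr M_{1-p}$, not $\mathscr M_p$, acting on the double integral \eqref{Y2}. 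So your setup, pursued consistently, does not land on the formula as stated. The paper reaches \eqref{Y6} because its display \eqref{Y8} assigns the indices the other way round, $\mathscr M_p(K)\cdot\mathscr M_{1-p}(J_\nu)=\mathscr M_p(F_\nu)$, and then inverts $\mathscr M_{1-p}(J_\nu)$ directly; that assignment contradicts the correlation identity exactly as you (and the paper, immediately above \eqref{Y8}) state it. You must either justify that reversed assignment or conclude that the numerator of \eqref{Y6} should carry $\mathscr M_{1-p}$; as written, your proposal simply asserts arrival at \eqref{Y6} at the one point where the two alternatives diverge.
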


\begin{proof}
Applying $\mathscr M^{-1}$ to the equation \eqref{Y1},  we get
   \[ \mathscr M_p\left(\int_0^\infty J_\nu(xt)\,\Big( \coth(\pi t) - \dfrac1{\pi t}\Big)\, \dfrac{{\rm d}t}{t^{\nu+1}}\right) =
      \mathscr M_p(F_\nu)\,.\]
By the Mellin--convolution property
   \[ \mathscr M_p\left( f \star g\right) =
      \mathscr M_p\left(\int_0^\infty f(rt) \cdot g(t)\, {\rm d}t\right) = \mathscr M_p(f) \cdot \mathscr M_{1-p}(g)\, ,\]
it follows that
   \begin{equation} \label{Y8}
      \mathscr M_p \left( x^{-\nu-1}\big(\coth\pi x - (\pi x)^{-1})\right)\cdot \mathscr M_{1-p}(J_\nu) = \mathscr M_p(F_\nu)\,.
   \end{equation}
The fundamental analytic strip contains $(\nu, 1+\nu)$, because the $\coth$ behaves like
   \[ \coth z = \begin{cases}
                      \dfrac1{z} + \dfrac z3 + \mathcal O(z^3), & \qquad \mbox{if}\ \ z \to 0\\
                      1 + 2{\rm e}^{-2z}\left[1 + \mathcal O\left({\rm e}^{-2z}\right)\right] & \qquad \mbox{if}\ \ z\to \infty
                   \end{cases} \, ;\]
in both cases $\Re(z)>0$. Now, rewriting $x^{-\nu-1}\left(\coth\pi x - (\pi x)^{-1}\right)$ by (C) and using termwise
the Beta--function description, we conclude that
   \[ \mathscr M_p \left( x^{-\nu-1}\left(\coth\pi x - (\pi x)^{-1}\right)\right) = \frac1\pi\,
              {\rm B}\left( \dfrac{p-\nu}2, \dfrac{\nu-p}2+1\right)\, \zeta(\nu-p+2)\, , \]
for all $p \in (\nu, \nu+1)$. Therefore
   \[ \mathscr M_{1-p}(J_\nu) = \dfrac{\pi \mathscr M_p(F_\nu)}{{\rm B}\left( \frac{p-\nu}2, \frac{\nu-p}2+1\right)\, \zeta(\nu-p+2)}\, ,\]
which finally results in
   \[ J_\nu(x) = \dfrac{\nu+1}{2{\rm i}} \int_{c - {\rm i}\infty}^{c + {\rm i}\infty}
                  \dfrac{\mathscr M_p\left(\displaystyle \int_0^\infty \int_0^{[e^s]} e^{-(\nu+1)s}\,\mathfrak d_u\left(I_\nu(xu)
                - \mathbf L_\nu(xu) \right)\,{\rm d}s\,{\rm d}u\right)}{{\rm B}\left( \frac{p-\nu}2, \frac{\nu-p}2+1\right)\,
                  \zeta(\nu-p+2)}\, x^{p-1}\, {\rm d}p\, ,\]
where the fundamental strip contains $c = \nu + \frac12$. So, the desired integral representation formula is established.
\end{proof}

We note that the formula--collection \cite{http} does not contain \eqref{Y6}.

\begin{theorem} Let $0<\nu<\frac32, x>0$. Then
  \begin{equation} \label{Y9}
      \mathfrak T_{\nu, \nu+1}^{I,\mathbf L}(x) = \dfrac1{2^{p+1}\,\pi\,{\rm i}}
                   \int_{c-{\rm i}\infty}^{c+{\rm i}\infty}
                   \dfrac{\Gamma\left(\frac{\nu-p}2 + \frac12\right)\,\zeta(\nu-p+2)}
                   {\sin\left[ \frac\pi2(p-\nu)\right]
                   \cdot \Gamma\left( \frac{\nu+p}2 + \frac12\right)}\, x^{-p}\, {\rm d}p \,.
      \end{equation}
Here also $c \in (\nu, \nu+1)$.
\end{theorem}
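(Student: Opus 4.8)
The plan is to obtain \eqref{Y9} as the Mellin inversion of an explicitly computed Mellin transform. The natural starting point is Theorem 12, which already exhibits $\mathfrak T_{\nu, \nu+1}^{I, \mathbf L}$ as the Mellin convolution
   \[ \mathfrak T_{\nu, \nu+1}^{I, \mathbf L}(x) = \int_0^\infty J_\nu(xt)\, K(t)\, {\rm d}t, \qquad
      K(t) = t^{-\nu-1}\left( \coth(\pi t) - \tfrac1{\pi t}\right) \]
of the Bessel function against the kernel $K$. Applying the convolution property of the Mellin transform exactly as in the proof of Theorem 14, I would factor $\mathscr M_p\big(\mathfrak T_{\nu, \nu+1}^{I, \mathbf L}\big) = \mathscr M_p(K)\cdot \mathscr M_{1-p}(J_\nu)$, so that the whole task reduces to inserting the two known factors and inverting.

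For the kernel factor I would quote the evaluation already carried out in the proof of Theorem 14, namely
   \[ \mathscr M_p(K) = \frac1\pi\, {\rm B}\!\left( \tfrac{p-\nu}2, \tfrac{\nu-p}2+1\right)\zeta(\nu-p+2)
                      = \frac{\zeta(\nu-p+2)}{\sin\!\left[\tfrac\pi2(p-\nu)\right]}\, , \]
valid on the strip $\Re(p)\in(\nu,\nu+1)$; the second equality is just Euler's reflection formula $\Gamma(z)\Gamma(1-z)=\pi/\sin(\pi z)$ with $z=\tfrac12(p-\nu)$ collapsing the Beta factor. For the Bessel factor I would use the classical Mellin transform $\mathscr M_s(J_\nu)=2^{s-1}\Gamma\!\left(\tfrac{s+\nu}2\right)/\Gamma\!\left(\tfrac{\nu-s}2+1\right)$, which holds for $-\nu<\Re(s)<\tfrac32$; evaluating it at $s=1-p$ gives
   \[ \mathscr M_{1-p}(J_\nu) = 2^{-p}\,\frac{\Gamma\!\left(\tfrac{\nu-p}2+\tfrac12\right)}{\Gamma\!\left(\tfrac{\nu+p}2+\tfrac12\right)}\, . \]

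Multiplying the two factors and substituting into the inverse transform $\mathfrak T_{\nu, \nu+1}^{I, \mathbf L}(x)=\tfrac1{2\pi{\rm i}}\int_{c-{\rm i}\infty}^{c+{\rm i}\infty}x^{-p}\,\mathscr M_p\big(\mathfrak T_{\nu, \nu+1}^{I, \mathbf L}\big)\,{\rm d}p$ is then purely mechanical: the factor $2^{-p}$ from the Bessel transform merges with the $\tfrac1{2\pi{\rm i}}$ to produce the displayed prefactor $\tfrac1{2^{p+1}\pi{\rm i}}$, while the Gamma quotient together with $\zeta(\nu-p+2)/\sin[\tfrac\pi2(p-\nu)]$ reproduces precisely the integrand of \eqref{Y9}. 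The admissible abscissa is dictated by the intersection of the two fundamental strips, $\Re(p)\in(\nu,\nu+1)$ for $\mathscr M_p(K)$ and $-\nu<\Re(1-p)<\tfrac32$ for the Bessel factor; requiring this common strip to be nonempty and to host a legitimate Bromwich--Wagner abscissa is exactly where the hypothesis $0<\nu<\tfrac32$ enters, and it pins the contour down to $c\in(\nu,\nu+1)$.

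The step I expect to be the main obstacle is not the algebra but the analytic bookkeeping of the inversion. One has to verify that the two strips genuinely overlap so that the factorization is legitimate along a common vertical line, that the product $\mathscr M_p(K)\,\mathscr M_{1-p}(J_\nu)$ decays rapidly enough along vertical lines for the line integral in \eqref{Y9} to converge absolutely, and that Fubini's theorem licenses the interchange of the $t$-integration with the Mellin integral. The vertical decay follows from Stirling's asymptotics for the ratio of the two Gamma functions, which dominates both the at most polynomial growth of $\zeta$ on vertical lines and the reciprocal of $\sin[\tfrac\pi2(p-\nu)]$ away from its zeros; once these points are secured, Mellin inversion on the strip $c\in(\nu,\nu+1)$ delivers \eqref{Y9}.
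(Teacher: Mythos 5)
Your proposal is correct and follows essentially the same route as the paper: both factor the Mellin transform of $\mathfrak T_{\nu,\nu+1}^{I,\mathbf L}$ (i.e.\ of $F_\nu$) via the convolution identity \eqref{Y8} into the kernel transform $\tfrac1\pi{\rm B}\left(\tfrac{p-\nu}2,\tfrac{\nu-p}2+1\right)\zeta(\nu-p+2)$ already computed for Theorem 14 and the Oberhettinger transform $\mathscr M_{1-p}(J_\nu)$, simplify by Euler's reflection formula, and invert on $c\in(\nu,\nu+1)$. Your added remarks on the overlap of the fundamental strips and the vertical decay needed for the inversion are bookkeeping the paper leaves implicit, but the argument is the same.
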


\begin{proof} Consider relation \eqref{Y8}. Expressing $\mathscr M_{1-p}(J_\nu)$ {\em via} formula \cite[p.93, Eq. 10.1]{ober}
  \[ \mathscr M_p\left(J_\nu(ax)\right) = \dfrac{2^{p-1}}{a^p}\, \dfrac{\Gamma\left( \frac{\nu+p}2\right)}
                 {\Gamma\left(\frac{\nu-p}2+1\right)}, \qquad a>0,\,-\nu<p<\tfrac32\,,\]
equality\eqref{Y8}, by virtue of the Euler's reflection formula becomes
  \begin{align*}
     \mathscr M_p(F_\nu) &= \dfrac{\Gamma\left( \frac{\nu-p}2+1\right)\,\Gamma\left(\frac{\nu-p}2 + \frac12\right)\,
                            \Gamma\left(\frac{p-\nu}2\right)\,\zeta(\nu-p+2)}{2^p\, \pi\,\Gamma\left( \frac{\nu+p}2+\frac12\right)}\\
                         &= \dfrac{\Gamma\left(\frac{\nu-p}2 + \frac12\right)\,\zeta(\nu-p+2)}{2^p\,\sin\left[ \frac\pi2(p-\nu)\right]
                           \cdot \Gamma\left( \frac{\nu+p}2+\frac12\right)}\,  .
  \end{align*}
Having in mind that $F_\nu(x)$ is the integral representation of the
Schl\"omilch series $\mathfrak T_{\nu, \nu+1}^{I,\mathbf L}(x)$,
inverting the last display by $\mathscr M_p^{-1}$ we arrive at the
asserted result.
\end{proof}

\section*{Acknowledgements}
The authors would like to cordially thank anonymous referee for his/her constructive comments and suggestions. The research of \'A. Baricz was supported by the Romanian National Research Council CNCS-UEFISCSU,
project number PN-II-RU-TE\underline{ }190/2013.

\singlespacing


\begin{thebibliography}{99}

\bibitem{AAR}
G. E. Andrews, R. Askey and R. Roy, {\it Special functions}, Encyclopedia of Mathematics and it Applications 71,
Cambridge University Press (Cambridge, 1999).

\bibitem{april}
A. April, Power carried by a nonparaxial TM beam, {\it J. Opt. Soc. Am. A}, {\bf 27} (2010), 76--81.

\bibitem{B3}
\'A. Baricz, Bounds for modified Bessel functions of the first and second kind, {\it Proc. Edinburgh Math. Soc.},
{\bf 53} (2010), 575–-599.

\bibitem{BJP0}
\'A. Baricz, D. Jankov and T. K. Pog\'any, Integral representation of first kind Kapteyn series,
{\it J. Math. Phys.}, {\bf 52} (2011), Art. 043518, 7pp.

\bibitem{BJP}
\'A. Baricz, D. Jankov and T. K. Pog\'any, Integral representations for Neumann-type series of Bessel functions
$I_\nu, Y_\nu$ and $K_\nu$, {\it Proc. Amer. Math. Soc.}, {\bf 140} (2012), 951--960.

\bibitem{BP}
\'A. Baricz and T. K. Pog\'any, Tur\'an determinants of Bessel functions, {\it Forum Math.} (to appear) [DOI 10.1515/FORM.2011.160].

\bibitem{BP1}
\'A. Baricz and T. K. Pog\'any, Properties of the product of modified Bessel functions, (submitted).

\bibitem{berndt}
B. Berndt and P. Levy, Problem {76--11}. A Bessel function summation, SIAM Rev., {\bf 27} (1985), 446,
in M. Klamkin (Ed.), {\it Problems in Applied Mathematics: Selections from SIAM Review}, Society for Industrial and Applied
Mathematics (SIAM) (Philadelphia, 1990), 179--180.

\bibitem{bondar}
V. F. Bondarenko, Efficient summation of Schl\"{o}milch series of cylindrical functions,
{\it Comput. Math. Math. Phys.}, {\bf 31} (1991), 101--104.

\bibitem{butzer}
P. L. Butzer, Bernoulli functions, Hilbert-type Poisson summation formulae, partial fraction expansions, and
Hilbert-Eisenstein series, in T.-X. He, P. J.-S. Shiue and Z.-K. Li (Eds.), {\it Analysis, Combinatorics and Computing},
Nova Science Publishers, Hauppauge (New York, 2002), 25--91.

\bibitem{butzer1}
P. L. Butzer and M. Hauss, Applications of sampling theorem to combinatorial analysis, Stirling numbers,
special functions and the Riemann zeta function, in J.R. Higgins and R. Stens (Eds.), {\it Sampling Theory in Fourier
and Signal Analysis. Advanced Topics}, Oxford Univ. Press, Oxford (1999) 1--37, 266--268.

\bibitem{BPS}
P. L. Butzer, T. K. Pog\'any and H. M. Srivastava, A linear ODE for the Omega function associated with the Euler
function $E_\alpha(z)$ and the Bernoulli Function $B_\alpha(z)$, {\it Appl. Math. Letters}, {\bf 19} (2006), 1073--1077.

\bibitem{Cah}
E. Cahen, Sur la fonction $\zeta(s)$ de Riemann et sur des fontions analogues,
{\it Ann. Sci. l'\'Ecole Norm. Sup. S\'er. Math.}, {\bf 11} (1894), 75--164.

\bibitem{carson}
E. Carson Yates Jr., Aerodynamic sensitivities for subsonic, sonic, and supersonic unsteady, nonplanar lifting--surace theory,
{\it NASA Technical Memorandum}, 100502 (1987), N88--12459, 18pp.

\bibitem{DD} D. E. Dominici, On Taylor series and Kapteyn series of the first and second type, {\it J. Comput. Appl. Math.},
{\bf 236} (2011), 39--48.

\bibitem{DP} B. Dra\v s\v ci\'c and T. K. Pog\'any, On integral representation of Bessel function of the first kind,
{\it J. Math. Anal. Appl.}, {\bf 308} (2005), 775--780.

\bibitem{elbert}
\'A. Elbert, Asymptotic expansion and continued fraction for Mathieu's series, {\it Period. Math. Hungar.},
{\bf 13} (1982), 1-–8.

\bibitem{Emers}
O. Emersleben, \"Uber die Reihe $\sum_{k=1}^\infty \tfrac{k}{(k^2+c^2)^2}$, {\it Math. Ann.}, {\bf 125} (1952), 165--171.

\bibitem{fichter}
W. B. Fichter, Stress concentration around brokened filaments in a filament--stiffened sheet,
{\it NASA Technical Note}, TN D-5453 (1969), 35pp.

\bibitem{gajewski}
T. Gajewski, Phase shifts for singular type nucleon-nucleon triplet-even potentials, {\it Nuclear Physics}, {\bf 46} (1963), 203–-209.

\bibitem{glasser}
M. L. Glasser, A class of Bessel summations, {\it Math. Comp.}, {\bf 37} (1981), 499--501.

\bibitem{GR}
I. S. Gradshtein and I. M. Ryzhik, {\it Table of Integrals, Series and Products}, 4. Edition supervised by Yu. V. Geronimus
and M. Yu. Tseitlin, Fizmatgiz (Moscow, 1963) (in Russian).

\bibitem{gubler}
E. Gubler, \"Uber bestimmte Integrale mit Besselschen Funktionen, {\it Z\"urich. Naturf. Ges.}, {\bf 47} (1902), 422--428.

\bibitem{hamb} H. Hamburger, \"Uber einige Beziehungen, die mit der Funktionalgleichung der Riemannschen $\zeta$--Funktion
\"aquivalent sind, {\it Math. Anal.}, {\bf 85} (1922), 129--140.

\bibitem{http} \verb"http://functions.wolfram.com/Bessel-TypeFunctions/BesselJ/22/05".

\bibitem{hurwil}
W. G. Hurley and D. J. Wilcox, Calculation of leakage inductance in transformer windings, {\it IEEE Trans. Power Electron.},
{\bf 9} (1994), 121--126.

\bibitem{itagaki}
M. Itagaki, Boundary element techniques for two-dimensional nuclear reactor calculations, {\it Eng. Anal.}, {\bf 4} (1987), 190--198.

\bibitem{ito}
K. It\^{o} (Ed.), {\it Encyclopedic Dictionary of Mathematics}, 2nd ed., Vol. 2. MIT Press (Cambridge, 1986).

\bibitem{JP}
D. Jankov and T. K. Pog\'any, Integral representation of Schl\"omilch series, {\it J. Classic. Anal.}, {\bf 1} (2012), 75--84.

\bibitem{JP1}
D. Jankov and T. K. Pog\'any, On coefficients of Kapteyn-type series, {\it Math. Slovaca}, (to appear).

\bibitem{JPS}
D. Jankov, T. K. Pog\'any and E. S\"uli, On the coefficients of Neumann series of Bessel functions,
{\it J. Math. Anal. Appl.}, {\bf 380} (2011), 628--631.

\bibitem{WK}
W. Kapteyn, Recherches sur les functions de Fourier--Bessel, {\em Ann. Sci. de l'\'Ecole Norm. Sup.},
{\bf 10} (1893), 91--120.

\bibitem{WK1}
W. Kapteyn, On an expansion of an arbitrary function in a series of Bessel functions,
{\em Messenger of Math.}, {\bf 35} (1906), 122–-125.

\bibitem{karamata}
J. Karamata, {\em Theory and Application of the Stieltjes Integral}, Srpska Akademija
Nauka, Posebna izdanja {CLIV}, Matemati\v cki institut, Knjiga {I} (Beograd, 1949) (in Serbian).

\bibitem{mathieu}
\'E. L. Mathieu , {\it Trait\'e de Physique Math\'ematique {\rm VI-VII:} Theory de l'Elasticit\'e des Corps Solides}
(Part 2), Gauthier-Villars (Paris, 1890).

\bibitem{mercer}
J. M. Mercer, Effective isotropic dipole-dipole pair potential, {\it Molec. Phys.}, {\bf 69} (1990), 625--638.

\bibitem{miles}
J. W. Miles and H. E. Huppert, Lee waves in a stratified flow. Part 4. Perturbation approximations, {\it J. Fluid Mech.},
{\bf 35} (1969), 497--525.

\bibitem{miller1}
A. R. Miller, $m$-dimensional Schl\"{o}milch series, {\it Canad. Math. Bull.}, {\bf 38} (1995), 347--351.

\bibitem{miller2}
A. R. Miller, On certain Schl\"omilch--type series, {\it J. Comput. Appl. Math.}, {\bf 80} (1997), 83--95.

\bibitem{nicholson}
J. W. Nicholson, Notes on Bessel functions, {\it Quart. J. Math.}, {\bf 42} (1911), 216--224.

\bibitem{NN}
N. Nielsen, Recherches sur les s\'eries de fonctions cylindriques dues \'a C. Neumann et W. Kapteyn,
{\em Ann. Sci. de l'\'Ecole Norm. Sup.}, {\bf 18} (1901), 39--75.

\bibitem{nisteruk}
C. J. Nisteruk and A. Isihara, Quantum--statistical distribution functions of a hard--sphere system, {\it Phys. Rev.},
{\bf 154} (1967), 150–-159.

\bibitem{ober}
F. Oberhettinger, {\it Tables of Mellin Transforms}, Springer-Verlag (Berlin, 1974).

\bibitem{nist}
R. B. Paris, Struve and related functions, in F. W. J. Olver, D. W. Lozier, R. F. Boisvert and C. W. Clark (Eds.),
{\it NIST Handbook of Mathematical Functions}, Cambridge University Press (Cambridge, 2010) 287--301.

\bibitem{PG}
M. D. Petkovi\'c and M. L. Glasser, Problem {85--14.} Infinite sums of Bessel functions, {\it SIAM Rev.}, {\bf 27} (1985) 446. in M.  Klamkin (Ed.), {\it Problems in Applied Mathematics: Selections from SIAM Review}, Society for Industrial and Applied
Mathematics (SIAM) (Philadelphia, 1990), 175--176.

\bibitem{P1}
T. K. Pog\'any, Integral representation of a series which includes the Mathieu ${\boldsymbol a}$-series,
{\it J. Math. Anal. Appl.}, {\bf 296} (2004), 309--313.

\bibitem{PS1}
T. K. Pog\'any and E. S\"uli, Integral representation for Neumann series of Bessel functions, {\em Proc. Amer. Math. Soc.},
{\bf 137} (2009), 2363--2368.

\bibitem{PSri}
T. K. Pog\'any and H. M. Srivastava, Some two-sided bounding inequlities for the Butzer-Flocke-Hauss Omega function,
{\it Math. Ineq. Appl.}, {\bf 10} (2007), 587--595.

\bibitem{PST}
T. K. Pog\'any, H. M. Srivastava and \v Z. Tomovski, Some families of Mathieu \textbf{a}--series and alternative
Mathieu \textbf{a}--series, {\it Appl. Math. Comput.}, {\bf 173} (2006), 69--108.

\bibitem{MDR} M. D. Rawn, On the summation of Fourier and Bessel series, {\it J. Math. Anal. Appl.},
{\bf 193} (1995), 282--295.

\bibitem{SR}
J. W. S. Rayleigh, On a physical interpretation of Schl\"omilch's theorem in Bessel's functions,
{\em Phil. Mag.}, {\bf 6} (1911), 567--571.

\bibitem{OXS}
O. X. Schl\"omilch, Ueber die Bessel'sche Funktion, {\it Zeitschrift f\"ur Mathematik und Physik}, {\bf 2} (1857), 137--165.

\bibitem{span}
W. Szpankowski, \textit{Average Case Analysis of Algorithms on Sequences. With a foreword by P. Flajolet},
Wiley-Interscience Series in Discrete Mathematics and Optimization, Wiley (New York, 2001).

\bibitem{step}
G. L. Stephens, Scattering of plane waves by soft obstacles: anomalous
diffraction theory for circular cylinders, {\it Appl. Opt.}, {\bf 23} (1984), 954--959.

\bibitem{SJPS}
H. M. Srivastava, D. Jankov, T. K. Pog\'any and R. K. Saxena, Two--sided inequalities for the extended Hurwitz--Lerch
Zeta function, {\it Comp. Math. Appl.}, {\bf 62} (2011), 516--522.

\bibitem{struve}
H. Struve, Beitrag zur Theorie der Diffraction an Fernr\"ohren, {\it Ann. Physik Chemie}, {\bf 17} (1882), 1008–-1016.

\bibitem{TLD}
R. C. Tautz, I. Lerche and D. Dominici, Methods for summing general Kapteyn series, {\it J. Phys. A},
{\bf 44} (2011), 385202, 14 pp.

\bibitem{ECT}
E. C. Titchmarsh, {\it Theory of Fourier Integrals}, Clarendon Press (Oxford, 1948).

\bibitem{TSVA}
S. B. Tri\v ckovi\'c, M. S. Stankovi\'c, M. V. Vidanovi\'c and V. N. Aleksi\'c, Integral transforms and
summation of some Schl\"omilch series, in {\it Proceedings of the 5th International Symposium on Mathematical
Analysis and its Applications} (Ni\v ska Banja, 2002), {\it Mat. Vesnik} {\bf 54} (2002), 211--218.

\bibitem{walker}
J. Walker, {\it The Analytical Theory of Light}, Cambridge University Press (Cambridge, 1904).

\bibitem{WRW1}
C. E. Watkins and J. H. Berman, On the kernel function of the integral equation relating the lift and downwash
distributions of oscillating wings in supersonic flow, {\it NACA Tech. Note}, 1257 (1955), 147--164.

\bibitem{WRW}
C. E. Watkins, H. L. Runyan and D. S. Woolston, On the kernel function of the integral equation relating the lift and downwash
distributions of oscillating finite wings in subsonic flow, {\it NACA Tech. Note}, 1234 (1954), 703--718.

\bibitem{watson}
G. N. Watson, {\it A Treatise on the Theory of Bessel Functions}, Cambridge University Press (Cambridge, 1922).

\bibitem{zayed1}
A. I. Zayed, A proof of new summation formulae by using sampling theorems, {\it Proc. Amer. Math. Soc.}, {\bf 117} (1993), 699--710.

\bibitem{zayed2}
A. I. Zayed, {\it Advances in Shannon's sampling theory}, CRC Press (Boca Raton, 1993).

\end{thebibliography}
\end{document}